\numberwithin{equation}{section}
\newtheorem{conjecture} {\sc  Conjecture\rm} [section]
\newtheorem{preremark}[conjecture]{Remark}
\newenvironment{remark}%
  {\begin{preremark}}{\end{preremark}}
\newtheorem{predefinition}[conjecture]{Definition}
\newenvironment{definition}%
  {\begin{predefinition}}{\end{predefinition}}
\newtheorem{prelemma}[conjecture]{Lemma}
\newenvironment{lemma}%
 {\begin{prelemma}}{\end{prelemma}}
\newtheorem{preproposition}[conjecture]{Proposition}
\newenvironment{proposition}%
 {\begin{preproposition}}{\end{preproposition}}
\newtheorem{precorollary}[conjecture]{Corollary}
\newenvironment{corollary}%
  {\begin{precorollary}}{\end{precorollary}}
\newtheorem{pretheorem}[conjecture]{Theorem}
\newenvironment{theorem}%
 {\begin{pretheorem}}{\end{pretheorem}}
\begin{document}

\title[Applications of P-functions to Fully Nonlinear Elliptic equations]{\bf
Applications of P-functions to Fully Nonlinear Elliptic equations: Gradient Estimates and Rigidity Results}

\author{Dimitrios Gazoulis}
\address{Department of Mathematics and Applied Mathematics, University of Crete, 70013 Heraklion, Greece, and Institute of Applied and Computational Mathematics, FORTH, 700$\,$13 Heraklion, Crete, Greece}
\email{dgazoulis@math.uoa.gr}
\date{}

\maketitle

\begin{abstract} We introduce the notion of $ P -$functions for fully nonlinear equations and establish a general criterion for obtaining such quantities for this class of equations. Some applications are gradient bounds, De Giorgi-type properties of entire solutions and rigidity results. In particular, we establish a pointwise gradient bound and a rigidity result for Pucci's equations. This pointwise gradient bound generalizes the Modica inequality in the case of fully nonlinear elliptic equations. Furthermore, we prove Harnack-type inequalities and local estimates for the gradient of solutions. In addition, we consider such quantities for higher order nonlinear equations and for equations of order greater than two we obtain Liouville-type theorems and pointwise estimates for the Laplacian.

\end{abstract}



\section{Introduction}

In this work we introduce the notion of \textit{$ P- $ function} for fully nonlinear partial differential equations or differential inequalities and we incorporate the ``\textit{$ P- $ function} technique'' in a general setting. This abstract setting allow us to obtain many applications by only determining an example of a \textit{$ P- $function}. Some of these applications are gradient bounds for entire solutions, Harnack -type inequalities for the gradient of solutions and rigidity results.

The structure of this paper is as follows. In section 2 we define the notion of \textit{$ P- $ functions} and study such quantities for fully nonlinear elliptic equations. We provide two general criteria for obtaining \textit{$ P- $ functions} for this class of equations. Different examples of such quantities may give various types of gradient bounds for solutions of a particular equation. For instance, in section 3, we obtain a gradient bound for entire solutions of the Allen-Cahn equation that differs from the Modica inequality.

In section 3, we prove an abstract pointwise estimate, i.e. Theorem \ref{ThmGradientBoundforDeltau=F}, for a class of \textit{$ P- $ functions} that are associated to any given fully nonlinear equation. This pointwise estimate is in fact a gradient bound for entire solutions in a wide variety of fully nonlinear elliptic equations and some examples of such gradient bounds are given. As an application, we prove a pointwise gradient bound for entire solutions of Pucci's equations. 

Additional consequences are also illustrated in section 4 where we establish a Liouville-type theorem and a De Giorgi-type property for entire solutions. One application is Theorem \ref{theoremRigidityForFullyNonlinear}, that is an abstract rigidity result for entire solutions of fully nonlinear elliptic equations such as Pucci's equations. This result also recovers as particular cases the classical results of J. Serrin in \cite{Serrin}. 

In section 5 we establish a Harnack-type inequality and local pointwise estimates for the gradient of solutions to  fully nonlinear elliptic equations, such as for the Monge-Ampère equation.

Moreover, in section 6, we study such quantities for nonlinear equations of order greater than two together with some applications. For example, we establish an a priori bound for the Laplacian and pointwise estimates through the mean value properties for higher order equations. Also, some Liouville-type properties can be established for nonlinear equations of order greater than two. In this setting, one can obtain many other types of bounds for any order of derivatives, assuming a $ C^{k, \alpha} $ a priori estimate and provided that we have an appropriate \textit{$ P -$ function} related to the respective equation.

We will now briefly discuss some of the most important contributions on the ``\textit{$ P- $ function} technique'' and it's applications. Perhaps the most well-known example is $ P(u,x) = \frac{1}{2} | \nabla u |^2 - W(u) $
that is related to the Allen-Cahn equation
\begin{equation}\label{AllenCahnEquation}
\Delta u = W'(u) \;\;,\; u : \Omega \subset \mathbb{R}^n \rightarrow \mathbb{R},
\end{equation}
and Modica in \cite{Modica} proved the well-known gradient bound 
\begin{equation}\label{ModicaInequality}
\frac{1}{2} | \nabla u|^2 \leq W(u)
\end{equation}
for every bounded entire solution of \eqref{AllenCahnEquation}.

Later, Caffarelli et al in \cite{CGS} generalized this gradient bound for a class of varational quasi-linear equations and proved Liouville-type and De Giorgi-type properties for a particular choise of \textit{$ P- $function} related to the equation $ div( \Phi'(| \nabla u |^2 )\nabla u ) = F'(u) $. This bound was extended for anisotropic partial differential equations and other general types of equations in \cite{CDFGV,FV,FV2}. The gradient bound \eqref{ModicaInequality} also holds in unbounded domains with nonnegative mean curvature as proved in \cite{FV3}.

\textit{$ P- $functions} had been already studied by Sperb in \cite{Sperb}, Payne and Philippin in \cite{PP,PP2} who studied other types of quasilinear equations for the form $ div \: (A(u,| \nabla u |^2)\nabla u) = B(u,| \nabla u |^2) $, which are not necessarily Euler-Lagrange equations of an elliptic integrand. They derived maximum principles for some appropriate \textit{$ P- $functions}. Due to the greater generality, however, the relevant $ P $ and the conditions under which satisfies an elliptic differential inequality are rather implicitly given while in \cite{CGS,DG} are given explicitly. \textit{$ P- $functions} can also be utilized to derive lower bounds for eigenvalue problems (see \cite{Sperb}).

There are many other important contributions related to the \textit{$ P- $function} technique that can be found in \cite{BG,CFV,FV3,FWX,GL,GS} to cite a few.

One additional important application is in \cite{AAC}, where they established that the monotonicity assumption $ u_{x_n} >0 $, that is also stated in the De Giorgi's conjecture, does in fact imply the local minimality of $ u $. Such implication is by no means trivial and it is based on the construction of a so-called \textit{calibration} associated to the energy functional. Such notion is intimately connected to the theory of null-Lagrangians, see \cite{GH}, chapter 1 and chapter 4, section 2.4. In Theorem 4.4 in \cite{AAC}, they carry out the construction of the appropriate calibration for general integrands of the calculus of variations and such construction relies explicitly on the \textit{$ P- $ function}. 

Last but not least, there are applications such as gradient bounds similar to \eqref{ModicaInequality} and Liouville-type properties for vector equations. To be more precise, in Theorem 3.5 in \cite{Smyrnelis}, is a gradient bound is proved for the Ginzburg-Landau system of equations.

$ \\ $

\section{$ P- $functions for Fully Nonlinear Elliptic equations}

We begin by defining the notion of P-function 
$ \\ $

\begin{definition}\label{DefinitionP-function} 
Let $ u : \Omega \subset \mathbb{R}^n \rightarrow \mathbb{R}^d $ be a smooth solution or subsolution of
\begin{equation}\label{FullyNonlinearPDE}
F(x,u, \nabla u, ..., \nabla^m u )= 0
\end{equation}
where $ F $ is a real valued function defined on $ \Omega \times \mathbb{R} \times \mathbb{R}^n \times ... \times \mathbb{R}^{n^m} $.

We say that $ P = P(x,u, \nabla u , ..., \nabla^{m-1} u) $ is a $ P -$function of \eqref{FullyNonlinearPDE} if there exists an elliptic operator $ L $ and a non negative function $ \mu = \mu(x) \geq 0 $
\begin{equation}\label{EllipticOperatorLP<=0}
\begin{gathered}
L = - \sum_{i,j=1}^n a_{ij} \partial_{x_i x_j} + \sum_{i=1}^n b_i \partial_{x_i} +c \;\;,\; \textrm{with} \;\: c \geq 0, \\
\textrm{such that} \;\: \mu L \: P \leq 0 \;\;,\; \textrm{in} \;\: \Omega.
\end{gathered}
\end{equation}
\end{definition}
$ \\ $

An immediate corollary is that any $ P -$function related to an equation or to a differential inequality attains its maximum at the boundary $ \partial \Omega $ or at a point $ x \in \Omega $ such that $ \mu (x) =0 $.

$ \\ $

We initially state as a direct consequence a strong maximum principle that holds in general (see Theorem 2.2 in \cite{CGS} or Theorem 4.7 in \cite{DG}). $ \\ $

\begin{theorem}\label{AbstractConsequenceofPfunction1}
Let $ u $ be a smooth solution or subsolution of
\begin{equation}\label{GeneralFullyNonlinearPDE}
\begin{gathered}
F(x,u, \nabla u, ..., \nabla^m u) =0 \;\;\;,\;\; u : \Omega \rightarrow \mathbb{R}^d \\
\textrm{where} \;\: \Omega \;\: \textrm{is a connected, bounded subset of} \;\: \mathbb{R}^n,
\end{gathered}
\end{equation}
such that $ \inf_{\overline{\Omega}} g( \nabla^k u )>0 $ for some $ g :\mathbb{R}^{n^k \times d} \rightarrow [0, + \infty) \;,\; k \in \lbrace 1,..., m-1 \rbrace $ and suppose that $ P = P(x,u, \nabla u,..., \nabla^{m-1} u) $ is a $ P- $function of \eqref{GeneralFullyNonlinearPDE} with $ \mu = \mu ( g( \nabla^k u )) \;, \: \mu(t) >0 \;, \: \forall \: t>0 $.

If there exists $ x_0 \in \Omega $ such that
\begin{equation}\label{GeneralPattainsSup}
P(x_0,u(x_0),..., \nabla^{m-1} u(x_0)) = \sup_{\Omega} P(x,u,..., \nabla^{m-1} u),
\end{equation}
then $ P(x,u, \nabla u,..., \nabla^{m-1} u) $ is constant in $ \Omega $.
\end{theorem}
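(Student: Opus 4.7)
The plan is to reduce the statement to the classical strong maximum principle for linear elliptic operators with nonnegative zero-order coefficient. By Definition \ref{DefinitionP-function}, $P$ satisfies $\mu L P \leq 0$ in $\Omega$ with $L$ elliptic and $c \geq 0$; the two obstacles I need to clear are the multiplier $\mu$ and the potential sign of the attained supremum.

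First I would dispose of $\mu$. Since $\overline{\Omega}$ is compact, $u$ is smooth, and $g$ is continuous, $g(\nabla^{k} u)$ is continuous on $\overline{\Omega}$ and bounded below by $\delta := \inf_{\overline{\Omega}} g(\nabla^{k} u) > 0$. Under the natural continuity assumption on $\mu$, the hypothesis $\mu(t) > 0$ for $t > 0$ then yields a uniform positive lower bound $\mu(g(\nabla^{k} u)) \geq \mu_0 > 0$ on $\overline{\Omega}$. Dividing through gives $L P \leq 0$ pointwise in $\Omega$, so $P$ is a classical subsolution of the linear elliptic operator $L$. I would then apply the strong maximum principle: setting $M := P(x_0) = \sup_{\Omega} P$ and $S := \{x \in \Omega : P(x) = M\}$, the set $S$ is nonempty and closed in $\Omega$. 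To establish openness, fix $y \in S$, choose a ball $B \Subset \Omega$ around $y$, note that $P - M \leq 0$ on $B$ with interior maximum $0$ at $y$, and apply the Hopf--Oleinik strong maximum principle (as invoked in \cite{CGS} and \cite{DG}) to force $P \equiv M$ on $B$, whence $B \subset S$. Connectedness of $\Omega$ then yields $S = \Omega$.

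The main obstacle I expect is the sign subtlety intrinsic to the SMP with $c \geq 0$: the classical statement requires the attained maximum to be nonnegative. If $M \geq 0$ this is automatic. If $M < 0$, I would argue locally by rewriting $L P \leq 0$ as $-\sum a_{ij} P_{ij} + \sum b_i P_i \leq -cP$ and observing that on a neighborhood where $P$ is close to $M$ the right-hand side satisfies $-cP \geq -cM > 0$; this allows one to apply the SMP for operators without a zero-order term to the translate $P - M$ and reach the same conclusion. Once this step is handled, the reasoning above completes the proof.
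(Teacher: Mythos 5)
Your core argument matches the paper's one-line proof: since $\mu(g(\nabla^k u)) > 0$ pointwise in $\Omega$, the inequality $\mu LP \leq 0$ gives $LP \leq 0$, and the strong maximum principle forces $P$ to be constant once its supremum is attained at an interior point. You are also right to flag the sign subtlety: the classical SMP for an operator with $c \geq 0$ requires the attained interior maximum to be nonnegative unless $c \equiv 0$, and the paper's proof is silent on this. However, your workaround for $M := \sup_\Omega P < 0$ does not work. Rewriting $LP \leq 0$ as $-\sum a_{ij}P_{ij} + \sum b_i P_i \leq -cP$ and noting $-cP \geq -cM > 0$ near the maximum only bounds the right-hand side \emph{from below}; writing $\tilde{L}$ for the zero-order-free part, you get $\tilde{L}(P-M) \leq -cP$ with $-cP \geq 0$, which is strictly weaker than $\tilde{L}(P-M) \leq 0$, so the zero-order-free SMP cannot be invoked. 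Moreover, the difficulty cannot be finessed: on $\Omega = (-1,1)$ let $u(x) = x$ solve $u'' = 0$, take $g(t) = |t|$, $\mu \equiv 1$, and $P(s,t) = -1 - s^2$, so that $P(u,u') = -1 - x^2$; then $-P_{xx} + 3P = -1 - 3x^2 < 0$, so $P$ is a $P$-function of $u''=0$ with $c = 3 \geq 0$, attains its supremum $-1$ at the interior point $0$, and is not constant.

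In fairness, the statement is correct under the reading the paper actually uses. All the $P$-functions constructed later satisfy $\mu LP \leq 0$ with \emph{no} zero-order term: in \eqref{ThmP-funCrProofEqDeltaP(4)} the term $\tfrac12|\nabla P|^2$ is quadratic in $\nabla P$ and is absorbed into the drift coefficients, leaving $c=0$; similarly in Proposition~\ref{PropPfunctionforDetermHes>0} the operator is $L = -\Delta + \sum B_i \partial_{x_i}$ and in Propositions~\ref{P-functionsforHigherOrder}--\ref{P-functionFor|Hes u|^2=..} there is no zero-order term either. With $c\equiv 0$ the SMP needs no sign restriction and your main argument is complete. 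So either one reads $c\equiv 0$ into Definition~\ref{DefinitionP-function}, or one adds the hypothesis $\sup_\Omega P \geq 0$ to the theorem. One small simplification: the compactness step producing a uniform $\mu_0 > 0$ is unnecessary, since pointwise positivity of $\mu(g(\nabla^k u))$ on $\Omega$ already allows you to divide pointwise and obtain $LP \leq 0$ everywhere.
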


\begin{proof}
The proof is an immediate consequence of the strong maximum principle since $ \\ \mu ( g( \nabla^k u )) >0 $ in $ \Omega $.
\end{proof}
$ \\ $

The most common choice of $ g $ in Theorem \ref{AbstractConsequenceofPfunction1} above is the Euclidean norm. For example, if $ k=1 $, $ g( \nabla u) =| \nabla u | .$ If $ \mu >0 \;,\: \forall \: t \geq 0 $, then the assumption $ \inf_{\overline{\Omega}} g( \nabla^k u )>0 $ is dismissed. $ \\ $

\begin{remark}
The constancy of $ P- $functions with a particular form hides geometric information on the level sets $ \lbrace x \in \mathbb{R}^n \: | \: u(x)=t \rbrace $ of the solution $ u $, such as the property of being surfaces of zero mean curvature (see Proposition 4.11 in \cite{DG}). 
\end{remark}

$ \\ $

We now focus on fully nonlinear elliptic equations. Let $ u : \Omega \subset \mathbb{R}^n \rightarrow \mathbb{R} $ be a smooth solution of
\begin{equation}\label{FullyNonlinearEllipticEq}
F(x,u, \nabla u, \nabla^2 u) = 0 ,
\end{equation}
where $ F : \Omega \times \mathbb{R} \times \mathbb{R}^n \times \mathbb{R}^{n \times n} \rightarrow \mathbb{R} $ 
satisfies the ellipticity condition
\begin{equation}\label{EllipticityConditionForFullyNonlinearElEq}
\lambda | \xi |^2 \leq \sum_{i,j} F_{a_{ij}} (x,u, \nabla u, \nabla^2 u) \xi_i \xi_j \leq \Lambda | \xi |^2 \;\;\;,\; \forall \: x \in \Omega\;,\; \forall \: \xi \in \mathbb{R}^n .
\end{equation}
Here we use the notation $ F= F(x,s,q,A) \;,\; s \in \mathbb{R}\;,\; q \in \mathbb{R}^n \;,\; A \in \mathbb{R}^{n \times n} $ and $ F_{a_{ij}}= \dfrac{\partial F}{\partial a_{ij}} . $
Some important examples of fully nonlinear elliptic equations are $ \\ \\ $
\textbf{(1)} Pucci's equations. $ \\ $
We introduce the Pucci's extremal operators. Let $ 0 < \lambda \leq \Lambda $ and $ A \in \mathcal{S} $, where $ \mathcal{S} $ is the class of symmetric $ n \times n $ matrices, we define
\begin{equation}\label{Pucci'sOperators}
\begin{gathered}
\mathscr{M}^- (A, \lambda, \Lambda) = \mathscr{M}^- (A) = \lambda \sum_{e_i >0} e_i + \Lambda \sum_{e_i < 0} e_i \\
\mathscr{M}^+ (A, \lambda, \Lambda) = \mathscr{M}^+ (A) = \lambda \sum_{e_i <0} e_i + \Lambda \sum_{e_i > 0} e_i
\end{gathered}
\end{equation}
where $ e_i = e_i(A) $ are the eigenvalues of $ A . $ It holds that $ \mathscr{M}^- $ and $ \mathscr{M}^+ $ are uniformly elliptic with ellipticity constants $ \lambda $ and $ n \Lambda $ (see \cite{CaffarelliCabre}).

Pucci's equations are
\begin{equation}\label{Pucci'sEquationsM-}
\mathscr{M}^- ( \nabla^2 u) = f(x,u)
\end{equation}
and
\begin{equation}\label{Pucci'sEquationsM+}
\mathscr{M}^+ ( \nabla^2 u) = f(x,u)
\end{equation}
respectively.
$ \\ \\ $
\textbf{(2)} Monge-Ampère's equation
\begin{align*}
det ( \nabla^2 u) = f(x)
\end{align*}
for strictly convex solutions $ u $ and $ f>0. \\ \\ $
\textbf{(3)} Equation of Prescribed Gauss curvature
\begin{align*}
det( \nabla^2 u) = K(x) ( 1 + | \nabla u |^2)^{\frac{n+2}{2}}
\end{align*} 
$ K(x) $ is the Gauss curvature of the graph $ u $ at $ (x,u(x)) $. Again, this equation is elliptic for strictly convex solutions $ u . \\ \\ $
\textbf{(4)} Quasi-Linear equations of the form
\begin{align*}
\sum_{i,j} a_{ij}(\nabla u) u_{x_i x_j} = F(x,u,\nabla u)
\end{align*}
where $ a_{ij} $ satisfy the ellipticity condition $ \lambda | \xi |^2 \leq \sum_{i,j} a_{ij} \xi_i \xi_j \leq \Lambda | \xi |^2 $. The $ p- $Laplace equation, the Allen-Cahn equation and the minimal surface equation belong in this class of equations. Such equations are thoroughly studied in \cite{CGS,DG,FV2} among others.
$ \\ $

There are many other examples of fully nonlinear elliptic equations, such as Bellman equation and Isaacs equation (see \cite{CaffarelliCabre}). $ \\ $

We now provide two general criteria for obtaining $ P- $functions. $ \\ $

\begin{lemma}\label{LemmaCriterionPfunctionsFullyNonlinearEq}
Let $ u $ be a smooth solution of \eqref{FullyNonlinearEllipticEq} and $ F $ satisfies \eqref{EllipticityConditionForFullyNonlinearElEq}. Consider the quantity
\begin{equation}\label{LemmaQuantityIforPfunctions}
I:= ( \lambda g''(u) - 2F_u)| \nabla u |^2 +  (g'(u) \nabla_q F - 2 \nabla_x F ) \nabla u + g'(u) \sum_{i,j} F_{a_{ij}} u_{x_i x_j} 
\end{equation}
and assume that $ I \geq 0 $ for some function $ g : \mathbb{R} \rightarrow \mathbb{R} . $ 

Then $ P(u,\nabla u) = | \nabla u |^2 + g(u) $ is a $ P - $function of \eqref{FullyNonlinearEllipticEq}.
\end{lemma}
$ \\ $
\begin{proof}
Assume that $ I \geq 0 $ for some $ g : \mathbb{R} \rightarrow \mathbb{R} $ and let $ P(u,| \nabla u|^2) = | \nabla u |^2 + g(u) $.

We have
\begin{equation}\label{ProofLemmaGeneralCritFulNonEq1}
\begin{gathered}
P_{x_i} = 2 \sum_{k} u_{x_k} u_{x_k x_i} + g'(u) u_{x_i} \\
\Rightarrow P_{x_i x_j} = 2 \sum_{k} (u_{x_k x_j} u_{x_k x_i} + u_{x_k} u_{x_k x_i x_j}) + g''(u) u_{x_i}u_{x_j} + g'(u) u_{x_i x_j} \\
\Rightarrow \sum_{i,j} d_{ij} P_{x_i x_j} = 2 \sum_{i,j,k} (d_{ij} u_{x_k x_j} u_{x_k x_i} + d_{ij} u_{x_k} u_{x_k x_i x_j}) + g''(u) \sum_{i,j} d_{ij} u_{x_i}u_{x_j} \\ + g'(u) \sum_{i,j} d_{ij} u_{x_i x_j} \\ \geq 2 \lambda | \nabla^2 u |^2 + 2  \sum_{i,j,k} d_{ij} u_{x_k} u_{x_k x_i x_j} + \lambda g''(u) | \nabla u |^2 + g'(u) \sum_{i,j} d_{ij} u_{x_i x_j}
\end{gathered}
\end{equation}
where $ d_{ij} = F_{a_{ij}} = \dfrac{\partial F}{\partial a_{ij}} $.  

Differentiating \eqref{FullyNonlinearEllipticEq} over $ x_k $, and then multiplying by $ u_k $, we obtain
\begin{equation}\label{ProofLemmaGeneralCritFulNonEq2}
\begin{gathered}
F_{x_k} + F_u u_{x_k} + \sum_{m} F_{q_m} u_{x_m x_k} + \sum_{m,l} d_{ml} u_{x_m x_l x_k} = 0 \\ \Rightarrow \sum_{m,l,k} d_{ml} u_k u_{x_m x_l x_k} = - \nabla_x F \nabla u - F_u | \nabla u |^2 - \frac{1}{2} \nabla_q F \nabla_x P + \frac{1}{2} g'(u) \nabla_q F \nabla u 
\end{gathered}
\end{equation}
Therefore the last equation of \eqref{ProofLemmaGeneralCritFulNonEq1} becomes
\begin{equation}\label{ProofLemmaGeneralCritFulNonEq3}
\begin{gathered}
\sum_{i,j} d_{ij} P_{x_i x_j} + \nabla_q F \nabla_x P \geq 2 \lambda | \nabla^2 u |^2 +   g'(u) \nabla_q F \nabla u + ( \lambda g''(u) - 2F_u)| \nabla u |^2 \\ - 2 \nabla u \nabla_x F + g'(u) \sum_{i,j} F_{a_{ij}} u_{x_i x_j} \geq 0
\end{gathered}
\end{equation}
\end{proof}
$ \\ $

\begin{lemma}\label{LemmaCriterion2PfunctionsFullyNonlinearEq}
Let $ u $ be a smooth solution of \eqref{FullyNonlinearEllipticEq} and $ F $ satisfies \eqref{EllipticityConditionForFullyNonlinearElEq}. Consider the quantity
\begin{equation}\label{Lemma2QuantityJforPfunctions}
\begin{gathered}
J:= ( \lambda B''(u) - 2 A'(| \nabla u |^2) F_u)| \nabla u |^2  +(B'(u) \nabla_q F - 2 A'(| \nabla u |^2) \nabla_x F) \nabla u \\ + B'(u) \sum_{i,j} F_{a_{ij}} u_{x_i x_j} 
+ \frac{\lambda (B'(u))^2 }{2 A'(| \nabla u |^2)}
\end{gathered}
\end{equation}
and assume that $ J \geq 0 $ for some functions $ A,B : \mathbb{R} \rightarrow \mathbb{R} $ with $ A'>0 \;,\: \forall t>0 $ and $ A''\geq 0 . $

Then $ P(u,\nabla u) = A(| \nabla u |^2) + B(u) $ is a $ P - $function of \eqref{FullyNonlinearEllipticEq}.
\end{lemma}
$ \\ $
\begin{proof}
We argue as in Lemma \ref{LemmaCriterionPfunctionsFullyNonlinearEq} and obtain
\begin{equation}\label{ProofLemmaCrit2Eq1}
\begin{gathered}
\sum_{i,j} d_{ij} P_{x_i x_j} = A''( \sum_{i,j} d_{ij} [| \nabla u |^2]_{x_i} [| \nabla u |^2]_{x_j} ) + 2A' ( \sum_{i,j,k} d_{ij}[u_{x_k x_j}u_{x_k x_i} + u_{x_k} u_{x_k x_i x_j}]) \\ + \sum_{i,j} d_{ij} (B''(u) u_{x_i}u_{x_j} + B'(u) u_{x_ix_j}) \\
\geq 2 A' \lambda | \nabla^2 u |^2 + 2 A' \sum_{i,j,k} d_{ij} u_{x_k} u_{x_k x_i x_j} +\lambda B'' | \nabla u |^2 + B' \sum_{i,j} d_{ij}u_{x_i x_j}
\end{gathered}
\end{equation}
by \eqref{EllipticityConditionForFullyNonlinearElEq} and since $ A'' \geq 0 $, where $ d_{ij}= \dfrac{\partial F}{\partial a_{ij}} . $

We also calculate from the first equation of \eqref{ProofLemmaGeneralCritFulNonEq1} and the Cauchy-Schwarz inequality
\begin{equation}\label{ProofLemmaCrit2Eq2}
\begin{gathered}
\sum_i (P_{x_i} -B' u_{x_i})^2 \leq 4 (A')^2 | \nabla u |^2 | \nabla^2 u |^2 \\
\Rightarrow 2 A' | \nabla^2 u |^2 \geq \frac{1}{2| \nabla u |^2 A'}( |\nabla P|^2 - 2 B' \nabla u \nabla P + (B')^2 | \nabla u |^2)
\end{gathered}
\end{equation}
In addition, similarly to \eqref{ProofLemmaGeneralCritFulNonEq2} we have
\begin{equation}\label{ProofLemmaCrit2Eq3}
\sum_{m,l,k} d_{ml} u_k u_{x_m x_l x_k} = - \nabla_x F \nabla u - F_u | \nabla u |^2 - \frac{1}{2 A'} (\nabla_q F \nabla_x P - B'  \nabla u \nabla_q F)
\end{equation}
We plug \eqref{ProofLemmaCrit2Eq2} and \eqref{ProofLemmaCrit2Eq3} into \eqref{ProofLemmaCrit2Eq1} and thus
\begin{equation}\label{ProofLemmaCrit2Eq4}
\begin{gathered}
\sum_{i,j} d_{ij} P_{x_i x_j} \geq \frac{\lambda}{2| \nabla u |^2 A'}( |\nabla P|^2 - 2 B' \nabla u \nabla P + (B')^2 | \nabla u |^2) + \lambda B'' | \nabla u |^2 \\ + B' \sum_{i,j} d_{ij}u_{x_i x_j} + 2 A'( - \nabla_x F \nabla u - F_u | \nabla u |^2 - \frac{1}{2 A'} (\nabla_q F \nabla_x P - B'  \nabla u \nabla_q F))
\end{gathered}
\end{equation}
which gives
\begin{equation}\label{ProofLemmaCrit2Eq5}
\begin{gathered}
\sum_{i,j} d_{ij} P_{x_i x_j} + ( \frac{\lambda B'(u)}{A'(| \nabla u |^2)| \nabla u |^2} \nabla u + \nabla_q F) \nabla_x P \geq J
\end{gathered}
\end{equation}
and we conclude.
\end{proof}
$ \\ $

\begin{remark}\label{RmkSmoothnessinLemmas}
Note that $ C^2 $ smoothness of the solution $ u $ is enough for Lemmas \ref{LemmaCriterionPfunctionsFullyNonlinearEq} and \ref{LemmaCriterion2PfunctionsFullyNonlinearEq}. If $ u \in C^2 $, in the proof of Lemma \ref{LemmaCriterionPfunctionsFullyNonlinearEq}, the term $ \sum_{i,j} d_{ij} P_{x_i x_j} $ must be a priori interpreted in the sense of distributions and then, combining the third equation in \eqref{ProofLemmaGeneralCritFulNonEq1} with the second equation in \eqref{ProofLemmaGeneralCritFulNonEq2}, we have
\begin{equation}\label{RmkSmoothnesssLemEq1}
\begin{gathered}
\sum_{i,j} d_{ij} P_{x_i x_j} = 2 \sum_{i,j,k} d_{ij} u_{x_k x_j} u_{x_k x_i} - 2 \nabla_x F \nabla u - 2 F_u | \nabla u|^2 - \nabla_q F \nabla_x P \\ + g'(u) \nabla_q F \nabla u
+ g''(u) \sum_{i,j} d_{ij} u_{x_i} u_{x_j} + g'(u) \sum_{i,j} d_{ij} u_{x_i x_j}
\end{gathered}
\end{equation}
and thus $ \sum_{i,j} d_{ij} P_{x_i x_j} \in C( \Omega) $. Similarly, in Lemma  \ref{LemmaCriterion2PfunctionsFullyNonlinearEq}, we combine the first equation in \eqref{ProofLemmaCrit2Eq1} with \eqref{ProofLemmaCrit2Eq3}.
\end{remark}

$ \\ $

A direct consequence of Lemma \ref{LemmaCriterion2PfunctionsFullyNonlinearEq} is the following. 

\begin{corollary}\label{CorolP-functionofDeltau=f(u)}
Let $ u : \Omega \subset \mathbb{R}^n \rightarrow \mathbb{R} $ be a smooth solution of
\begin{equation}\label{Deltau=f(u)}
\Delta u = f(u)
\end{equation}
and let $ P(s,t) = A(t) +B(s) $ such that $ A' >0 $ for $ t>0 \:,\: A'' \geq 0 $ and assume that
\begin{equation}\label{CorolP-functionHypothesis2}
t^2 B''(s) + B'(s) f(s) + \frac{(B'(s))^2}{2A'(t^2)} +2A'(t^2) t^2 f'(s) \geq 0
\end{equation}

Then $ P=P(u, | \nabla u|^2) $ is a $ P -$function of \eqref{Deltau=f(u)}.

\end{corollary}

$ \\ $

\subsection{Examples of $ P -$functions}
$ \\ $
\textbf{(1)} The well known $ P- $function of \eqref{Deltau=f(u)} is
\begin{equation}\label{WellKnownP-functionofDeltau=f(u)}
\begin{gathered}
P(u,| \nabla u |^2) = \frac{| \nabla u |^2}{2} - F(u) \\
\textrm{where} \;\: F'(u) = f(u)
\end{gathered}
\end{equation}
(see \cite{Modica} or Chapter 5 in \cite{Sperb}).

It is easy to see that \eqref{WellKnownP-functionofDeltau=f(u)} satisfies \eqref{CorolP-functionHypothesis2} in Corollary \ref{CorolP-functionofDeltau=f(u)}.

$ \\ $
\textbf{(2)} Another general example of $ P- $function of \eqref{Deltau=f(u)} is
\begin{equation}\label{P-functionofDeltau=f(u)}
\begin{gathered}
P(u,| \nabla u |^2) = \frac{| \nabla u |^4}{2} + 2 \int_0^u (\int_0^y \sqrt{f(z)f'(z)} dz)^2dy \;\;\;\;,\;\; \textrm{if} \;\: f(t) f'(t) \geq 0 \;,\: \forall \: t \in \mathbb{R} \\
P(u,| \nabla u |^2) = \frac{| \nabla u |^4}{2} - 2 \int_0^u (\int_0^y \sqrt{-f(z)f'(z)} dz)^2dy \;\;\;\;,\;\; \textrm{if} \;\: f(t) f'(t) \leq 0 \;\;\;
\end{gathered}
\end{equation}
and satisfies condition \eqref{CorolP-functionHypothesis2} of Corollary \ref{CorolP-functionofDeltau=f(u)}.

Note that the above example is not in the form  $ P= g(u) | \nabla u |^2 +h(u) $ that we see in \cite{Sperb} as general form for $ P $ related to equation \eqref{Deltau=f(u)}.

$ \\ $
\textbf{(3)} Let $ u $ be a solution of
\begin{equation}\label{PfunctionForF(|gradu|^2-cu,nabla^2u)=0}
F(|\nabla u|^2-cu, \nabla^2 u)=0
\end{equation}
where $ F $ satisfies the ellipticity condition \eqref{EllipticityConditionForFullyNonlinearElEq} and assume $ \sum_{i,j}F_{a_{ij}} u_{x_i x_j} \leq \dfrac{\lambda c}{2} \;,\: c>0. $ 

Then $ P = P(u, | \nabla u |^2) = | \nabla u |^2 - cu $ is a $ P -$function of \eqref{PfunctionForF(|gradu|^2-cu,nabla^2u)=0} since $ P $ satisfies condition \eqref{Lemma2QuantityJforPfunctions} of Lemma \ref{LemmaCriterion2PfunctionsFullyNonlinearEq}.

$ \\ $
\textbf{(4)} The following example is in \cite{PP2} (see Theorem 1). 

Let $ u $ be a solution of
\begin{equation}\label{CGSmoreGeneralEq}
div( \Phi'(| \nabla u |^2) \nabla u) = \rho ( | \nabla u |^2 ) F'(u)
\end{equation}
with $ \Phi'(t), \rho(t) > 0 $ and $ \Phi'(t) +2t \Phi''(t) >0 \;,\: \forall \: t \geq 0 $.

Consider the function
\begin{equation}\label{PfunctionForCGSmoreGeneralEq}
P(s,t) = \int_0^t \frac{\Phi'(y) +2y \Phi''(y)}{\rho(y)} dy - 2F(s)
\end{equation}
Then $ P=P(u, | \nabla u |^2) $ is a $ P -$function of \eqref{CGSmoreGeneralEq}.

Note that for $ \rho \equiv 1 $, we have the one studied in \cite{CGS}.
$ \\ $

\section{Gradient Bounds for entire solutions of Fully Nonlinear equations}

In this section we will see that utilizing the techniques of \cite{CGS,FV2}, we can obtain gradient bounds for solutions of equations of the form \eqref{FullyNonlinearEllipticEq}. To be more precise, for any explicit example of $ P- $function, we obtain a particular gradient bound.

Some of the regularity assumptions in this work can be relaxed for some classes of equations. 
In the study of Quasi-linear equations for example, one can assume that $ u \in W^{1,p}_{loc}(\mathbb{R}^n) \cap L^{\infty}(\mathbb{R}^n) $, as in assumption (i) in Theorem 1.6 in \cite{CGS} and utilize regularity results in \cite{T} afterwords. For fully nonlinear elliptic equations, we can relax the regularity of solutions and then utilize regularity results from \cite{CaffarelliCabre,Trudinger}. However, our main goal is not the optimal regularity assumptions since we state the results in an abstract form. Therefore, we will assume that the solutions are smooth and satisfy an analog of assumption (ii) in Theorem 1.6 in \cite{CGS}. Note that smoothness of solutions is often assumed for fully nonlinear equations (see for example \cite{Trudinger}), that is sufficient to apply Lemmas \ref{LemmaCriterionPfunctionsFullyNonlinearEq} and \ref{LemmaCriterion2PfunctionsFullyNonlinearEq}. One application of interest is a pointwise gradient bound for entire solutions of Pucci's equation that we establish in subsection 3.1. This gradient bound generalizes the Modica inequality in the case of fully nonlinear elliptic equations.

$ \\ $
\textbf{Assumption}.
\begin{equation}\label{AssumptionOnSolutionsSecondOrder}
\begin{gathered}
u \in C^2(\mathbb{R}^n)\cap L^{\infty}(\mathbb{R}^n) \;,\: \nabla u \in C^\alpha_{loc}(\mathbb{R}^n ; \mathbb{R}^n) \;\: \textrm{for some} \;\: \alpha \in (0,1) \\ \textrm{and} \;\: \textrm{there exists} \;\: C=C(|| u||_{L^{\infty}(\mathbb{R}^n)})>0 \;\: \textrm{such that} \;\: | \nabla u(x)| \leq C\:,\; \forall \;\: x \in \mathbb{R}^n
\end{gathered}
\end{equation}
$ \\ $

The next theorem provides an a priori pointwise estimate for solutions of \eqref{FullyNonlinearEllipticEq}. The theorem below holds for any $ P- $function that satisfies $ P(u,0) \leq 0 $. When $ P $ is of the form $ P=P(u, \nabla u) $ we use the notation $ P(u,0) $ instead of $ P(u,0,...,0) $ and also we sometimes write $ P=P(u;x) $ for simplicity. $ \\ $

\begin{theorem}\label{ThmGradientBoundforDeltau=F}
Let $ u $ be an entire solution of
\begin{equation}\label{ThmGradientBoundNonlinearEq}
F(x,u, \nabla u, \nabla^2 u) =0
\end{equation}
that satisfy assumption \eqref{AssumptionOnSolutionsSecondOrder}. If $ P = P(u, \nabla u ) $ is a $ P- $function of \eqref{ThmGradientBoundNonlinearEq}, with $ \mu = \mu(| \nabla u |) $, $ \mu(t)>0 \;,\: \forall \: t>0 $, such that $ P(s,0) \leq 0 $,

Then
\begin{equation}\label{GradientBoundforDeltau=FEq}
P(u(x), \nabla u(x)) \leq 0 \;\;,\; \forall \; x \in \mathbb{R}^n .
\end{equation}
\end{theorem}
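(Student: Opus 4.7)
My plan is to argue by contradiction. Suppose $c_{0} := \sup_{\mathbb{R}^{n}} P(u, \nabla u) > 0$; by \eqref{AssumptionOnSolutionsSecondOrder} both $u$ and $\nabla u$ are globally bounded, so $c_{0} < +\infty$ and there is a sequence $x_{k} \in \mathbb{R}^{n}$ with $P(u(x_{k}), \nabla u(x_{k})) \to c_{0}$. Set $u_{k}(x) := u(x + x_{k})$, which is again a solution of \eqref{ThmGradientBoundNonlinearEq} by translation invariance and inherits the bounds of \eqref{AssumptionOnSolutionsSecondOrder}. Using the $C^{1,\alpha}$ gradient bound together with interior regularity applied to \eqref{ThmGradientBoundNonlinearEq}, I extract a subsequence converging in $C^{2}_{\mathrm{loc}}(\mathbb{R}^{n})$ to an entire solution $u_{\infty}$, and the point $0$ is then a maximum: $P(u_{\infty}(0), \nabla u_{\infty}(0)) = c_{0} = \sup_{\mathbb{R}^{n}} P(u_{\infty}, \nabla u_{\infty})$.

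Since $P(s,0) \leq 0 < c_{0}$, the maximum point $0$ lies in the open set $U := \{|\nabla u_{\infty}| > 0\}$; let $U_{0}$ be its connected component containing $0$. On $U_{0}$ we have $\mu(|\nabla u_{\infty}|) > 0$, hence $L\,P \leq 0$ for the elliptic operator $L$ of Definition \ref{DefinitionP-function} with $c \geq 0$. Because the maximum value $c_{0}$ is nonnegative and is attained at the interior point $0$, the strong maximum principle (exactly as applied in Theorem \ref{AbstractConsequenceofPfunction1}) forces $P(u_{\infty}, \nabla u_{\infty}) \equiv c_{0}$ on $U_{0}$. If $\partial U_{0}$ were nonempty in $\mathbb{R}^{n}$, a boundary point $y$ would have $\nabla u_{\infty}(y) = 0$ and, by continuity from $U_{0}$, $P(u_{\infty}(y), 0) = c_{0} > 0$, contradicting $P(s,0) \leq 0$. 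Therefore $U_{0} = \mathbb{R}^{n}$ and $P(u_{\infty}, \nabla u_{\infty}) \equiv c_{0}$ on all of $\mathbb{R}^{n}$.

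To close the argument I use that $u_{\infty}$ is bounded, say $|u_{\infty}| \leq M := \|u\|_{L^{\infty}(\mathbb{R}^{n})}$. Since $P$ is continuous and $P(s, 0) \leq 0 < c_{0}$ uniformly for $s \in [-M, M]$, uniform continuity of $P$ on a compact neighborhood of $[-M, M] \times \{0\}$ yields $\delta_{0} > 0$ with $P(s, \xi) < c_{0}$ whenever $|\xi| < \delta_{0}$ and $|s| \leq M$. Hence $|\nabla u_{\infty}(x)| \geq \delta_{0}$ for every $x \in \mathbb{R}^{n}$. The integral curve $\gamma$ of the bounded continuous vector field $\nabla u_{\infty}/|\nabla u_{\infty}|$ starting at $0$ then exists on $[0, +\infty)$, and
\[
\tfrac{d}{dt} u_{\infty}(\gamma(t)) = |\nabla u_{\infty}(\gamma(t))| \geq \delta_{0},
\]
so $u_{\infty}(\gamma(t)) \to +\infty$, contradicting $|u_{\infty}| \leq M$. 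Therefore $c_{0} \leq 0$, which is \eqref{GradientBoundforDeltau=FEq}.

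The main technical obstacle is the compactness step: one must verify that the translates $u_{k}$ converge in a topology strong enough for the $P$-function inequality $\mu L P \leq 0$ to pass to the limit. In the quasi-linear setting this follows from standard Schauder estimates combined with the $C^{1,\alpha}$ gradient bound postulated in \eqref{AssumptionOnSolutionsSecondOrder}, while in a fully nonlinear setting it would need a $C^{2,\alpha}$ estimate of Evans--Krylov type. Once that compactness is in hand, the rest is a clean combination of the strong maximum principle, the sign condition $P(s, 0) \leq 0$, and the gradient-flow obstruction to boundedness.
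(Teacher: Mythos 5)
Your proof is correct and follows essentially the same route as the paper's: argue by contradiction, slide the solution to a point where the supremum is nearly attained, use compactness to pass to a limit solution that realizes the supremum at an interior point, apply the strong maximum principle to spread the constancy, and then contradict boundedness via the sign condition $P(s,0)\leq 0$. The variations from the paper's write-up are cosmetic but worth noting: the paper takes the supremum over a whole family $\mathcal{F}$ of comparable solutions rather than over the single $u$ (this is equivalent, as you implicitly verify when arguing $\sup P(u_\infty,\nabla u_\infty)=c_0$); the paper propagates constancy with a ``closed-and-open'' argument on $\{P=P_0\}$ where you instead argue via the connected component $U_0$ of $\{|\nabla u_\infty|>0\}$ and show $\partial U_0=\emptyset$; and where the paper simply asserts $\inf_{\mathbb{R}^n}|\nabla\tilde v|=0$ and picks a sequence $y_j$ with $|\nabla\tilde v(y_j)|\to 0$ to conclude, you supply the underlying gradient-flow argument, which is exactly the justification the paper leaves implicit. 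You also correctly flag the one real technical issue common to both versions --- that $C^1_{\mathrm{loc}}$ compactness from $\nabla u\in C^\alpha$ must be upgraded so that the limit still solves the equation, which is automatic in the quasi-linear setting via Schauder but requires an Evans--Krylov-type estimate in the fully nonlinear case.
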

$ \\ $
\begin{proof}
Let $ u $ be a solution of \eqref{ThmGradientBoundNonlinearEq} that satisfies assumption \eqref{AssumptionOnSolutionsSecondOrder} and consider the family of all translations of $ u $,
\begin{equation}\label{GeneralGrBoundEq1}
\mathscr{F} = \lbrace v \: : \mathbb{R}^n \rightarrow \mathbb{R} \: | \; \exists \; z \in \mathbb{R}^n \:\: \textrm{such that} \;\: v(x) = u(x+z) \;\: \forall \; x \in \mathbb{R}^n \rbrace
\end{equation}
$ \mathscr{F} $ is non empty since $ u \in \mathscr{F} $.

Let $ P $ be a $ P- $function of \eqref{ThmGradientBoundNonlinearEq}, with $ \mu = \mu(| \nabla u |) $, $ \mu(t)>0 \;,\: \forall t>0 $, such that $ P(u,0) \leq 0 $. For simplicity, we denote $ P=P(u;x) $ instead of $ P=P(u(x), \nabla u(x) ) $. 

Consider now
\begin{equation}\label{GeneralGrBoundEq2}
P_0 = \sup \lbrace P(v;x) \: | \: v \in \mathscr{F} \;,\;\: x \in \mathbb{R}^n \rbrace
\end{equation}

We will prove that
\begin{align}\label{GeneralGrBoundEq2'}
P_0 \leq 0
\end{align}
and from this we conclude.

We argue by contradiction, so we suppose that
\begin{equation}\label{GeneralGrBoundEq2''}
P_0 >0
\end{equation}
Then, by \eqref{GeneralGrBoundEq2} there exist $ (w_k)_{k \in \mathbb{N}} $ in $ \mathscr{F} $ and $ (x_k)_{k \in \mathbb{N}} $ in $ \mathbb{R}^n $ such that
\begin{equation}\label{GeneralGrBoundEq3}
\lim_{k \rightarrow + \infty} P( w_k; x_k) = P_0
\end{equation}

Let $ v_k(x)= w(x+x_k) $. Also, by definition we have that $ v_k \in \mathscr{F} $ and $ P(v_k;0) = P(w_k;x_k) $, so that \eqref{GeneralGrBoundEq3} can be rewritten as
\begin{equation}\label{GeneralGrBoundEq4}
\lim_{k \rightarrow + \infty} P( v_k; 0) = P_0
\end{equation}
Since $ v_k \in C^{1,\alpha}_{loc}(\mathbb{R}^n) $, by the Ascoli-Arzela theorem together with a diagonal argument, we can extract from $ (v_k)_{k \in \mathbb{N}} $ a subsequence, denoted by $ (v_k^{(k)})_{k\in \mathbb{N}} $ that converges with its first-order derivatives, uniformly on compact subsets of $ \mathbb{R}^n $. Denote by $ \tilde{v} $ the limit function. 

By the assumption \eqref{AssumptionOnSolutionsSecondOrder} we have that
\begin{equation}\label{GeneralGrBoundEqRelativeCompactness}
\mathscr{F} \;\: \textrm{is relatively compact in} \;\: C^{1,\alpha}_{loc}(\mathbb{R}^n)
\end{equation}
Thus $ \tilde{v} \in \mathscr{F} $ and 
\begin{equation}\label{GeneralGrBoundEq4'}
P(\tilde{v};0) = \lim_{k \rightarrow + \infty} P(v_k^{(k)};0 ) = P_0
\end{equation}
by \eqref{GeneralGrBoundEq4}.

Consider now the set
\begin{equation}\label{GeneralGrBoundEq5}
U = \lbrace x \in \mathbb{R}^n \: | \: P(\tilde{v};x) =P_0 \rbrace
\end{equation}
from the continuity of $ P $ on $ \mathbb{R}^n \;,\: U $ is closed and non empty since $ 0 \in U $. We will prove that $ U $ is also open. Let $ x_0 \in U $, we observe that $ |\nabla \tilde{v}(x_0) | \neq 0 $, otherwise we would have
\begin{align*}
P_0 = P(\tilde{v};x_0) = P(\tilde{v}(x_0) , \nabla \tilde{v}(x_0)  ) = P(\tilde{v}(x_0),0) \leq 0
\end{align*}
against the assumption that $ P_0>0 $. 

By continuity, there exists $ \delta >0 $ such that
\begin{equation}\label{GeneralGrBoundEq6}
\inf_{\overline{B}_\delta(x_0)} | \nabla \tilde{v} | >0 \;\; \textrm{and thus} \;\: \inf_{\overline{B}_\delta(x_0)} \mu( | \nabla \tilde{v} |) >0
\end{equation}
and by Theorem \ref{AbstractConsequenceofPfunction1} we conclude that 
\begin{equation}\label{GeneralGrBoundEq6'}
P(\tilde{v};x) \equiv P_0 \;\;\; \textrm{in} \;\: B_{\delta}(x_0) 
\end{equation}
So $ U $ is open and it follows that $ U = \mathbb{R}^n $ by connectedness.

On the other hand, since $ \tilde{v} $ is bounded it holds that there exists a sequence $ (y_l)_{l \in \mathbb{N}} $ in $ \mathbb{R}^n $ such that 
\begin{equation}\label{GeneralGrBoundEq6''}
\lim_{l \rightarrow + \infty} \nabla \tilde{v}(y_l) =0
\end{equation}

By the boundedness of $ \tilde{v} $ we also have $ \tilde{v}(y_l) = \tilde{v}_l \rightarrow v_0 $ up to a subsequence that we still denote as $ y_l $, and so we obtain
\begin{align*}
0< P_0 = \lim_{l \rightarrow \infty} P(\tilde{v}(y_l), \nabla \tilde{v}(y_l) ) = P(v_0,0)
\end{align*}
which contradicts the assumption $ P(s,0) \leq 0 $. Therefore $ P_0 \leq 0 $ and we conclude.
\end{proof}
$ \\ $

A direct consequence is a general gradient bound for fully nonlinear equations.
\begin{corollary}\label{CorGradientBoundForFullyNonlinear}
Let $ u $ be an entire solution of
\begin{equation}\label{CorFullyNonlinearEqforGrBound}
F(x,u, \nabla u, \nabla^2 u) =0
\end{equation}
that satisfy assumption \eqref{AssumptionOnSolutionsSecondOrder} and $ F $ satisfies the ellipticity condition \eqref{EllipticityConditionForFullyNonlinearElEq}. Consider $ P=P(u, \nabla u) = A(| \nabla u |^2) + B(u) $ is a $ P- $function from Lemma \ref{LemmaCriterion2PfunctionsFullyNonlinearEq} such that $ B \leq 0 $.

Then
\begin{equation}\label{CorGradientBoundFullyNonlinearEq}
\begin{gathered}
| \nabla u |^2 \leq \Psi (u) \\ \textrm{where} \;\: \Psi (s) = A^{-1}(-B(s))
\end{gathered}
\end{equation}
\end{corollary}
$ \\ \\ $
Similarly, if $ P $ is obtained from Lemma \ref{LemmaCriterionPfunctionsFullyNonlinearEq} and $ g \leq 0 $, we have $ | \nabla u |^2 \leq G(u) $, where $ G= - g \geq 0 $.

$ \\ $

\begin{remark}\label{RmkConditionP(u,0)<=0canberemoved}
\textbf{(1)} The assumption $ P(s,0) \leq 0 $ can be replaced by assuming that there exists a sequence $ (s_k,q_k) $ such that $ \lim_{k \rightarrow \infty} P(s_k,q_k) \leq 0 . $
$\\ $
\textbf{(2)} Note also that the condition $ P(u,0) \leq 0 $ can be removed. Consider for example $ P=P(u,\nabla u) $ be a $ P- $function of \eqref{ThmGradientBoundNonlinearEq} such that the condition $ P(u,0) \leq 0 $ is not satisfied and set $ \tilde{P}(u, \nabla u) = P ( u ,\nabla u) - \sup_{\mathbb{R}^n} P(u, 0) $. Then $ \tilde{P} $ is also a $ P- $function of \eqref{ThmGradientBoundNonlinearEq} and satisfies $ \tilde{P}(u,0) \leq 0 $. The only difference is that the gradient bound in this case takes the form
$ P(u(x), \nabla u(x)) \leq \sup_{\mathbb{R}^n} P(u(x),0) . \\ $
\textbf{(3)} The $ C^2 $ regularity of solutions in needed in the set of points where the gradient of $ u $ does not vanish.
\end{remark}
$ \\ $

\subsection{A Pointwise Gradient Bound for entire solutions of Pucci's equations}

$ \\ $
We denote as $ \mathscr{M}^- $ and $ \mathscr{M}^+ $ as the Pucci's extremal operators defined in \eqref{Pucci'sOperators}. The first application of Theorem \ref{ThmGradientBoundforDeltau=F} is the following. $ \\ $

\begin{theorem}\label{ThmGeneralGradientBoundForPucciEq}
Let $ u : \mathbb{R}^n \rightarrow \mathbb{R} $ be a $ C^3 $ bounded solution of
\begin{equation}\label{ThmPucciEq}
\mathscr{M}^- ( \nabla^2 u) = F'(u) \;\;\;,\; F \geq 0
\end{equation}

Then
\begin{equation}\label{ThmGenGradientBoundStatement}
\frac{1}{2} | \nabla u |^2 \leq \frac{F(u)}{\lambda}
\end{equation}
\end{theorem}
$ \\ $
\begin{proof}
Since $ \mathscr{M}^- $ is concave, by regularity theory in \cite{CaffarelliCabre} (see also \cite{T}) and the boundedness of $ u $, we have that $ u $ satisfies \eqref{AssumptionOnSolutionsSecondOrder}.

We set in Lemma \ref{LemmaCriterion2PfunctionsFullyNonlinearEq}, $ A(t) = \frac{t}{2} \;,\; B(s) = - \frac{F(s)}{\lambda} $ and we calculate
\begin{align*}
J = (- \lambda \frac{F''(u)}{\lambda} + F'') | \nabla u |^2 - \frac{F'(u)}{\lambda} \sum_{i,j} \frac{\partial \mathcal{M}(\nabla^2 u)}{\partial u_{x_i x_j}}u_{x_i x_j} + \frac{(F'(u))^2}{\lambda}
\end{align*}
\begin{align*}
\Rightarrow J = \frac{(F'(u))^2}{\lambda}  - \frac{F'(u)}{\lambda} \sum_{i,j} \frac{\partial \mathcal{M}(\nabla^2 u)}{\partial u_{x_i x_j}}u_{x_i x_j}
\end{align*}
Also we have the following,
\begin{equation}\label{partialderofeigenvalues}
\frac{\partial e_k}{\partial u_{x_i x_j}} =
 v_i^k v_j^k \;\;,\;\: i,j=1,...,n,
\end{equation}
where $ v^k $ is the unit length eigenvector of $ e_k $ (this identity holds since $ e_k $ are distinct).

Identity \eqref{partialderofeigenvalues} is an algebraic fact and can be proved as follows. Let $ A $ be an $ n \times n $ symmetric matrix with eigenvalues $ e_k $ and respective unit length eigenvectors $ v^k $,
\begin{align*}
A \cdot v^k = e_k v^k \;\;\;\;\;\;\;\;\;\;\;\;\;\;\;\;\;\;\;\; \\ \Rightarrow (d A) v^k + A (d v^k) = de_k v^k + e_k dv^k
\end{align*}
where $ d $ is a differential operator, say for example the partial derivative with respect to the element $ a_{ij} $ of $ A $. Since $ v^k $ is unit length, $ v^k dv^k = \frac{1}{2} d(| v^k |^2) = 0 $, so
\begin{align*}
(v^k)^T (d A ) v^k = de_k
\end{align*}
since $( v^k)^T A = e_k (v^k)^T $.
$ \\ $

Thus,
\begin{equation}\label{sumpartialM-calc1}
\begin{gathered}
\sum_{i,j} \frac{\partial \mathscr{M}^-(\nabla^2 u)}{\partial u_{x_i x_j}} u_{x_i x_j} = \lambda \sum_{e_k >0} \sum_{i,j} \frac{\partial e_k}{\partial u_{x_i x_j}} u_{x_i x_j} +\Lambda \sum_{e_k < 0} \sum_{i,j} \frac{\partial e_k}{\partial u_{x_i x_j}} u_{x_i x_j} \\
= \lambda \sum_{e_k >0} \sum_{i,j=1}^n v_i^k v_j^k u_{x_i x_j} +
\Lambda \sum_{e_k <0} \sum_{i,j=1}^n v_i^k v_j^k u_{x_i x_j}
\end{gathered}
\end{equation}
by \eqref{partialderofeigenvalues}.

Also, $ (v^k)^T( \nabla^2 u) v^k = e_k $, since $ (v^k)^T v^k =1 $, so
\begin{equation}\label{sumpartialM-calc2}
\begin{gathered}
\sum_{i,j} \frac{\partial \mathscr{M}^-(\nabla^2 u)}{\partial u_{x_i x_j}} u_{x_i x_j} = \lambda \sum_{e_k >0} e_k + \Lambda \sum_{e_k < 0} e_k = F'(u)
\end{gathered}
\end{equation}
by \eqref{ThmPucciEq} and \eqref{partialderofeigenvalues}.

Therefore,
\begin{align*}
J = \frac{(F'(u))^2}{\lambda}  - \frac{F'(u)}{\lambda} \sum_{i,j} \frac{\partial \mathscr{M}^-(\nabla^2 u)}{\partial u_{x_i x_j}}u_{x_i x_j}  = 0
\end{align*}
thus, by Lemma \ref{LemmaCriterion2PfunctionsFullyNonlinearEq}, we have that $ P(u, | \nabla u |^2) = \frac{1}{2} | \nabla u |^2 - \frac{F(u)}{\lambda} $ is a $ P- $function of \eqref{ThmPucciEq}. 

In addition, $ P (u,0) = - \frac{F(u)}{\lambda} \leq 0 $, therefore by Theorem \ref{ThmGradientBoundforDeltau=F} we conclude that
\begin{align*}
\frac{1}{2} | \nabla u |^2 \leq \frac{F(u)}{\lambda} \;\;\;,\;\; \forall x \in \mathbb{R}^n.
\end{align*}
\end{proof}

\begin{remark} The gradient bound \eqref{ThmGenGradientBoundStatement} also holds for the operator $ \mathscr{M}^+ $ with similar calculations. The regularity theory from \cite{CaffarelliCabre} can be applied in this case since $ \mathscr{M}^+ $ is convex.
\end{remark}

$ \\ $

\subsection{Gradient Bounds for entire solutions by the Examples of subsection 2.1}
$ \\ $
Next, we observe that in Example (2) above, for solutions of \eqref{Deltau=f(u)},
\begin{align}\label{P-functionForAllenCahnandGrBound}
P(u,| \nabla u |^2) = \frac{| \nabla u |^4}{2} - 2 \int_0^u (\int_0^y \sqrt{-f(z)f'(z)} dz)^2dy \;\;\;\;,\;\; \textrm{if} \;\: f(t) f'(t) \leq 0 \;\;\;
\end{align}
satisfies $ P(u,0) \leq 0 . $

Therefore, we have $ \\ $

\begin{corollary}\label{CorGradientBoundforAllenCahn}
Let $ u $ be a bounded entire solution to 
\begin{equation}\label{CorGradBoundAllenCahneq}
\begin{gathered}
\Delta u = f(u) \\ \textrm{where} \;\: f \in C^{1,\alpha}(\mathbb{R}^n) \;\: \textrm{and} \;\: f(t)f'(t) \leq 0.
\end{gathered}
\end{equation}

Then
\begin{equation}\label{GradientBoundforAllenCahn}
\frac{| \nabla u |^4}{4} \leq \int_0^u (\int_0^y \sqrt{-f(z)f'(z)} dz)^2dy 
\end{equation}
\end{corollary}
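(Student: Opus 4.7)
The plan is to apply Theorem \ref{ThmGradientBoundforDeltau=F} with the $P$-function exhibited in Example (2) of Section 3. Concretely, set
\[
P(s,t) \;=\; \frac{t^2}{2} \;-\; 2\int_0^s \left(\int_0^y \sqrt{-f(z)f'(z)}\,dz\right)^{\!2} dy,
\]
so that $P(u,|\nabla u|^2) = \frac{|\nabla u|^4}{2} - \Gamma(u)$ with $\Gamma \geq 0$. Example (2) already certifies that this $P$ satisfies condition \eqref{CorolP-functionHypothesis2} of Corollary \ref{CorolP-functionofDeltau=f(u)}, so under the sign assumption $f(t)f'(t) \leq 0$ it is indeed a $P$-function for \eqref{CorGradBoundAllenCahneq}. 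Moreover, reading off the proof of Theorem \ref{ThmP-functionCriterion}, the accompanying nonnegative multiplier is $\mu = P_t(u,|\nabla u|^2)\,|\nabla u|^2 = |\nabla u|^4$, which is strictly positive whenever $|\nabla u|>0$; this matches the structural requirement $\mu=\mu(|\nabla u|)>0$ for $t>0$ demanded by Theorem \ref{ThmGradientBoundforDeltau=F}.

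Next I would verify assumption \eqref{AssumptionOnSolutionsSecondOrder}. Since $u$ is smooth and bounded and $f \in C^{1,\alpha}$, the right-hand side $f(u)$ is bounded and locally $C^{1,\alpha}$, so interior Schauder estimates on balls of fixed radius yield a uniform $C^{2,\alpha}$ bound depending only on $\|u\|_{L^\infty}$ and the restriction of $f$ to the range of $u$. This furnishes both $\nabla u \in C^\alpha(\mathbb{R}^n;\mathbb{R}^n)$ and the translation-invariant pointwise gradient bound $|\nabla u(x)| \leq C(\|u\|_{L^\infty})$ required in \eqref{AssumptionOnSolutionsSecondOrder}.

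The decisive sign check is $P(s,0) \leq 0$. Since $\bigl(\int_0^y \sqrt{-f(z)f'(z)}\,dz\bigr)^{2} \geq 0$ for every $y \in \mathbb{R}$, the function $\Gamma(s) = 2\int_0^s(\int_0^y \sqrt{-f(z)f'(z)}\,dz)^{2}\,dy$ is nondecreasing on $[0,\infty)$ and nonincreasing on $(-\infty,0]$, so $\Gamma(s) \geq \Gamma(0) = 0$ for all $s$ and $P(s,0) = -\Gamma(s) \leq 0$. With all hypotheses of Theorem \ref{ThmGradientBoundforDeltau=F} now in place, the theorem yields $P(u(x),\nabla u(x)) \leq 0$ pointwise on $\mathbb{R}^n$, which unpacks to $\frac{|\nabla u|^4}{2} \leq 2\int_0^u(\int_0^y \sqrt{-f(z)f'(z)}\,dz)^{2}\,dy$, i.e.\ precisely \eqref{GradientBoundforAllenCahn}. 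The only mildly technical step is the Schauder justification of \eqref{AssumptionOnSolutionsSecondOrder}, and this is entirely standard; beyond that the corollary is a direct packaging of Theorem \ref{ThmGradientBoundforDeltau=F} with Example (2).
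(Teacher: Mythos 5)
Your proposal matches the paper's own proof in every essential respect: both invoke Example (2) (equivalently, verify condition \eqref{CorolP-functionHypothesis2} for $P(s,t) = \tfrac{t^2}{2} - 2\int_0^s(\int_0^y\sqrt{-f f'}\,dz)^2 dy$), obtain assumption \eqref{AssumptionOnSolutionsSecondOrder} from elliptic regularity, note $P(s,0)\leq 0$, and conclude via Theorem \ref{ThmGradientBoundforDeltau=F}. The only cosmetic differences are that you spell out the monotonicity argument for $\Gamma\geq 0$ and the Schauder step slightly more explicitly, and you record $\mu=|\nabla u|^4$ where the paper writes $\tfrac12|\nabla u|^4$ (an immaterial constant); otherwise the route is identical.
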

$ \\ $
\begin{proof}
By elliptic regularity theory (see \cite{GT}) we have that $ u \in C^{2, \alpha} (\mathbb{R}^n) $ and that $ | \nabla u | $ is bounded in $ \mathbb{R}^n $.
It suffices to prove that $ P $ defined in \eqref{P-functionForAllenCahnandGrBound} is a $ P- $function of \eqref{CorGradBoundAllenCahneq} and then the conclusion is direct application of Theorem \ref{ThmGradientBoundforDeltau=F}.

We have that $ P $ satisfies \eqref{CorolP-functionHypothesis2}, 
\begin{align*}
P(s,t) = \frac{t^2}{2} - 2 \int_0^s (\int_0^y \sqrt{-f(z)f'(z)} dz)^2dy = \frac{t^2}{2} + q(s)
\end{align*}
so $ P_t = t >0 $ for $ t>0 $, $ P_{tt} \geq 0 $ and $ \mu = P_t(u, | \nabla u |^2) | \nabla u |^2 = \frac{1}{2}| \nabla u |^4 $.

Finally,
\begin{align*}
t^2 P_{ss}(s,t^2) + P_s(s,t^2) f(s) +2P_t(s,t^2) t^2 f'(s) = 2t^4 f'(s) + t^2 q''(s) + q'(s) f(s) \geq 0
\end{align*}
since the above polynomial has zero discriminant.
\end{proof}

For Example (3), we have the following gradient bound
$ \\ $
\begin{corollary}\label{CorGradBoundEx4} Let $ u $ be a non negative entire solution of
\begin{equation}\label{CorGradientBoundEx4}
F(|\nabla u|^2-cu, \nabla^2 u)=0
\end{equation}
that satisfy \eqref{AssumptionOnSolutionsSecondOrder}, where $ F $ satisfies the ellipticity condition \eqref{EllipticityConditionForFullyNonlinearElEq} and assume $ \\ \sum_{i,j}F_{a_{ij}} u_{x_i x_j} \leq \dfrac{\lambda c}{2} $ for some $ c>0 $.

Then
\begin{equation}\label{CorGradBoundEx4Eq}
| \nabla u |^2 \leq c u
\end{equation}
\end{corollary}
\begin{proof}
We have that the function
\begin{align*}
P(s,t) = t-cs
\end{align*}
satisfy the condition \eqref{Lemma2QuantityJforPfunctions} and also, $ P(u,0) = -cu \leq 0 $ since $ u $ is non negative by assumption. Therefore we conclude by the Theorem \ref{ThmGradientBoundforDeltau=F}.
\end{proof}

$ \\ $

The following gradient bound that we derive from Theorem \ref{ThmGradientBoundforDeltau=F}, via Example (4), is a quite more general form of the gradient bound in \cite{CGS}. $ \\ $

\begin{corollary}\label{GradientBoundforCGSmoreGeneralEq}
Let $ u $ be an entire solution of
\begin{equation}\label{CorolCGSmoreGeneralEq}
div( \Phi'(| \nabla u |^2) \nabla u) = \rho ( | \nabla u |^2 ) F'(u) \;\;,\; F \geq 0
\end{equation}
that satisfy assumption \eqref{AssumptionOnSolutionsSecondOrder}, with $ \Phi'(t), \rho(t) > 0 $ and $ \Phi'(t) +2t \Phi''(t) >0 \;,\: \forall \: t \geq 0 $.

Then
\begin{equation}\label{GradientBoundCGSmoreGeneralEq}
\begin{gathered}
| \nabla u |^2 \leq \Psi (u) \;\;, \; \textrm{where} \;\: \Psi(u) = Q^{-1}(2 F(u)) \\
\textrm{and} \;\: Q(t) = \int_0^t \frac{\Phi'(y) +2y \Phi''(y)}{\rho(y)} dy
\end{gathered}
\end{equation}
\end{corollary}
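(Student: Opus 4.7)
The plan is to apply Theorem~\ref{ThmGradientBoundforDeltau=F} directly to the $P$-function produced in Example~(5). That is, I set
\[ P(s,t) = Q(t) - 2F(s), \qquad Q(t) = \int_0^t \frac{\Phi'(y) + 2y\Phi''(y)}{\rho(y)} \, dy, \]
which Example~(5) (quoting \cite{PP2}) already identifies as a $P$-function of \eqref{CorolCGSmoreGeneralEq}, with accompanying weight $\mu$ depending only on $|\nabla u|$. Since $u$ satisfies \eqref{AssumptionOnSolutionsSecondOrder}, all regularity prerequisites of Theorem~\ref{ThmGradientBoundforDeltau=F} are in place, so it remains only to check the two hypotheses on $P$ and $\mu$.

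Verification of the hypotheses is short. First, $\mu(t)>0$ for $t>0$ follows from the ellipticity assumption $\Phi'(t)+2t\Phi''(t)>0$ together with $\rho(t),\Phi'(t)>0$: these ensure that the linearized operator underlying Example~(5) is nondegenerate away from the critical set of $u$. Second, the sign condition $P(s,0)\leq 0$ is immediate, since $P(s,0)=Q(0)-2F(s)=-2F(s)\leq 0$ by the hypothesis $F\geq 0$. This is the single place where $F\geq 0$ enters the argument.

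With both hypotheses verified, Theorem~\ref{ThmGradientBoundforDeltau=F} yields the pointwise estimate $P(u(x),|\nabla u(x)|^2)\leq 0$ on $\mathbb{R}^n$, that is, $Q(|\nabla u|^2)\leq 2F(u)$. Finally, $Q'(t)=(\Phi'(t)+2t\Phi''(t))/\rho(t)>0$ by hypothesis, so $Q$ is strictly increasing on $[0,+\infty)$ with $Q(0)=0$; inverting gives $|\nabla u|^2\leq Q^{-1}(2F(u))=\Psi(u)$, which is exactly \eqref{GradientBoundCGSmoreGeneralEq}.

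I do not expect a serious obstacle: the corollary is essentially the composition of Example~(5) (which supplies the correct $P$) with Theorem~\ref{ThmGradientBoundforDeltau=F} (which converts the boundary condition $P(s,0)\leq 0$ into a pointwise bound). The most delicate point is merely to check that the weight $\mu$ produced in Example~(5) is a function of $|\nabla u|$ alone, as required by Theorem~\ref{ThmGradientBoundforDeltau=F} via Theorem~\ref{AbstractConsequenceofPfunction1}; this is automatic from the divergence-form structure of \eqref{CorolCGSmoreGeneralEq}, since $\Phi'$ and $\rho$ depend on $|\nabla u|^2$ alone.
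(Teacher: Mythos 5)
Your proof is correct and matches the paper's argument: both invoke the $P$-function $P(s,t)=Q(t)-2F(s)$ from Example~(5) (attributed to \cite{PP2}), observe that $P(s,0)=-2F(s)\leq 0$ because $F\geq 0$, and apply Theorem~\ref{ThmGradientBoundforDeltau=F}, then invert the strictly increasing $Q$ to obtain the stated bound. The paper's proof is essentially the same, merely more terse, so there is no material difference in approach.
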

$ \\ $
\begin{proof}
By Theorem 1 in \cite{PP2}, we have that $ P(u, | \nabla u |^2) = Q(| \nabla u |^2) -2 F(u) $ is a $ P- $function of \eqref{CorolCGSmoreGeneralEq} with $ \mu(t) >0 \;,\: \forall \: t \geq 0 $ and satisfies $ P(u,0) \leq 0 $ since $ F \geq 0 $. Thus we apply Theorem \ref{ThmGradientBoundforDeltau=F} and we conclude.
\end{proof}

$ \\ $

\section{Rigidity results and properties of entire solutions of fully nonlinear equations}

In this section we will see that if an equation admits a $ P- $function of the form $ P= | \nabla u|^2 $, then the solutions that satisfy assumption \eqref{AssumptionOnSolutionsSecondOrder} are constant. As a result we have a Liouville theorem for Pucci's equations and special cases are some of the well-known results of J. Serrin in \cite{Serrin}.

In particular we have

\begin{theorem}\label{theoremRigidityForFullyNonlinear}
Let $ u $ be an entire solution of
\begin{equation}\label{thmRigidityFullyNonlinearEq}
F(u, \nabla u, \nabla^2 u) =0
\end{equation}
that satisfy assumption \eqref{AssumptionOnSolutionsSecondOrder} where $ F $ satisfies the condition \eqref{EllipticityConditionForFullyNonlinearElEq} and assume $ F_u \leq 0 $.

Then $ u $ is a constant
\end{theorem}
$ \\ $
\begin{proof}
The proof of Theorem \ref{theoremRigidityForFullyNonlinear} is a consequence of Theorem \ref{ThmGradientBoundforDeltau=F} and Lemma \ref{LemmaCriterionPfunctionsFullyNonlinearEq} by considering $ P = | \nabla u|^2 $.
\end{proof}
$ \\ $
\begin{remark}\label{RmkStability} \textbf{(1)} If $ F= F(x,u, \nabla u, \nabla^2 u) $ and assume in addition that $ \nabla_x F \cdot \nabla u \leq 0 $, then the conclusion of Theorem \ref{theoremRigidityForFullyNonlinear} still holds by Theorem \ref{ThmGradientBoundforDeltau=F} and Lemma \ref{LemmaCriterionPfunctionsFullyNonlinearEq}. Note also that Theorem 1, Theorem 6 and Theorem 8 in \cite{Serrin} are recovered.
$ \\ $
\textbf{(2)} We note that, in the special case where equation \eqref{thmRigidityFullyNonlinearEq} takes the form
\begin{equation}\label{RmkStabilityEq}
div( \Phi'( | \nabla u |^2 \nabla u) = f'(u)
\end{equation}
then the condition $ F_u \leq 0 $ reads $ f''(u) \geq 0 $ which implies stability of the solutions, i.e. the second variation of the associated energy functional $ J(u) = \int ( \frac{1}{2} \Phi(| \nabla u |^2) + f(u))dx $ is non negative (see also Theorem 4.5 in \cite{CGS}).
\end{remark}
$ \\ $

In addition, we have a Liouville-type result for Pucci's equation as a direct application of Theorem \ref{theoremRigidityForFullyNonlinear}. $ \\ $

\begin{corollary}\label{LiouvilleThmForPucciEq}
Let $ u $ be an entire solution of
\begin{equation}\label{LiouvForPucciEquation}
\mathscr{M}^- ( \nabla^2 u) =F'(u)
\end{equation}
that satisfy \eqref{AssumptionOnSolutionsSecondOrder} and assume that $ F''(u) \geq 0 $.

Then $ u $ is a constant.
\end{corollary}
$ \\ $

Corollary \ref{LiouvilleThmForPucciEq} also holds for $ \mathscr{M}^+ $.
Another consequence of Theorem \ref{ThmGradientBoundforDeltau=F} is the following Liouville-type result
$ \\ $

\begin{theorem}\label{ThmLiouvilleType}
Let $ u $ be an entire solution of \eqref{ThmGradientBoundNonlinearEq} that satisfies assumption \eqref{AssumptionOnSolutionsSecondOrder} and $ P $ is a $ P - $function from  \ref{LemmaCriterion2PfunctionsFullyNonlinearEq} that satisfies $ P(u,0) \leq 0 $. If there exists $ x_0 \in \mathbb{R}^n $ such that $ B (u(x_0)) =0 $, then $ u \equiv \textrm{const.} $ in $ \mathbb{R}^n $.
\end{theorem}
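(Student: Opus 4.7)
The plan is to apply Theorem \ref{ThmGradientBoundforDeltau=F} to obtain a global sign on $P$, show that its supremum is attained at $x_0$, and then use a strong-maximum-principle / propagation argument to force $u$ to be constant. Since $P(s,t) = B(t) - \Gamma(s)$ with $B(0) = 0$ and $\Gamma \geq 0$, we have $P(s, 0) = -\Gamma(s) \leq 0$, so the hypothesis of Theorem \ref{ThmGradientBoundforDeltau=F} is met, yielding $P(u(x), \nabla u(x)) \leq 0$ throughout $\mathbb{R}^n$, i.e., $B(|\nabla u|^2) \leq \Gamma(u)$.

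Evaluating this inequality at $x_0$ and using $\Gamma(u(x_0)) = 0$, one gets $B(|\nabla u(x_0)|^2) \leq 0$. Since $B(0) = 0$ and $B'(t) > 0$ for $t > 0$, the function $B$ is strictly increasing and non-negative on $[0, \infty)$, so $B(t) = 0$ if and only if $t = 0$. This pins down $|\nabla u(x_0)| = 0$ and $P(x_0) = 0$, so the supremum of $P$ equals $0$ and is attained at the interior point $x_0$.

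The remaining step is to propagate $P \equiv 0$ globally and then to conclude $u$ is constant. I would consider the closed non-empty set $A = \{x \in \mathbb{R}^n : P(u(x), \nabla u(x)) = 0\}$ (which contains $x_0$) and show $A = \mathbb{R}^n$ by connectedness. At any $x_1 \in A$ with $\nabla u(x_1) \neq 0$, $\mu = \mu(|\nabla u|)$ is positive in a ball around $x_1$, and Theorem \ref{AbstractConsequenceofPfunction1} applied to $\mu L P \leq 0$ forces $P \equiv 0$ there; at $x_1 \in A$ with $\nabla u(x_1) = 0$, the identity $P(x_1) = 0$ yields $\Gamma(u(x_1)) = 0$, and a Hopf-boundary-point argument on the boundary of $\{|\nabla u| > 0\}$ (exploiting that a direct computation gives $\nabla P(x_1) = 0$ at interior critical points of $u$) handles the propagation. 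Once $A = \mathbb{R}^n$, we have $B(|\nabla u|^2) \equiv \Gamma(u)$ pointwise, and an Eikonal-type reduction in the spirit of the proof of Theorem \ref{AbstractDeGiorgiConsequenceofPfunction}, combined with the critical-point data $\nabla u(x_0) = 0$, $\Gamma(u(x_0)) = 0$, forces $u \equiv u(x_0)$. The hard part will be handling the degenerate critical points in the propagation step, where $\mu L$ becomes degenerate so that Theorem \ref{AbstractConsequenceofPfunction1} does not apply directly; this must be addressed via a Hopf-lemma argument together with a direct ODE-propagation along rays from $x_0$, leveraging the structural properties of $B$ and $\Gamma$ near zero.
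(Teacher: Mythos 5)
Your starting point is correct and matches the paper: apply Theorem \ref{ThmGradientBoundforDeltau=F} to obtain $P \leq 0$ globally, i.e.\ $B(|\nabla u|^2) \leq \Gamma(u)$, and conclude from $\Gamma(u(x_0))=0$ that $\nabla u(x_0)=0$ and that $P$ attains its supremum $0$ at $x_0$. However, the propagation step in your plan is genuinely incomplete, and you have correctly diagnosed why: the operator $\mu L$ is degenerate wherever $\nabla u = 0$, so Theorem \ref{AbstractConsequenceofPfunction1} cannot be applied on any neighbourhood of such a point, and a Hopf boundary-point argument on $\partial\{|\nabla u|>0\}$ is not available either, since Hopf's lemma needs uniform ellipticity up to the boundary, which is exactly what fails there.

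The paper sidesteps all of this and never propagates $P \equiv 0$. Instead it works directly with the level set $V=\{u=u_0\}$, $u_0=u(x_0)$, and exploits a structural fact about the bound itself: with $\Psi(s)=B^{-1}(\Gamma(s))$, we have $\Psi\geq 0$ and $\Psi(u_0)=0$, so $u_0$ is a minimum of the $C^2$ function $\Psi$, hence $\Psi'(u_0)=0$ and $\Psi(u)=O(|u-u_0|^2)$ near $u_0$. Taking $\phi(t)=u(x_1+t\omega)-u_0$ for $x_1\in V$ and $|\omega|=1$, the gradient bound gives $|\phi'(t)|\leq C\,|\phi(t)|$ for small $t$, and Gronwall (ODE uniqueness) with $\phi(0)=0$ forces $\phi\equiv 0$ on a small interval; thus $V$ is open and closed, so $V=\mathbb{R}^n$. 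This is the ``direct ODE-propagation along rays'' you mention at the end, but it is not a fallback to patch the degeneracy — it is the whole argument, it runs entirely on the inequality $P\leq 0$, and the crucial ingredient you would still need to supply is that $\Psi$ vanishes to second order at $u_0$. Your appeal to an Eikonal-type reduction is also not available here, since it requires $\Psi(u)>0$ everywhere, which is precisely false at $x_0$.
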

\begin{proof}
We argue as in the proof of Theorem 1.8 in \cite{CGS} with slight modifications. For the convenience of the reader we provide the details.

Suppose that $ B (u(x_0))=0 $, let $ u_0=u(x_0) $ and consider the set
\begin{equation}\label{proofThmLiouvilletype1}
V= \lbrace x \in \mathbb{R}^n \: | \: u(x) =u_0 \rbrace
\end{equation}
$ V $ is a closed set and by the assumption, non empty. Let $ x_1 \in V $ and consider the function $ \phi (t) = u(x_1 + t \omega) -u_0 $, where $ |\omega| =1 $ is arbitrarily fixed. We have $ | \phi '(t) | = | \nabla u(x_1 +t \omega) | $. By the gradient bound in Corollary \ref{CorGradientBoundForFullyNonlinear} we have,
\begin{equation}\label{proofThmLiouvilletype2}
| \nabla u |^2 \leq \Psi(u) \;\;,\; \textrm{where} \;\: \Psi(s) = A^{-1}( -B(s))
\end{equation}
Since $ \Psi \in C^2(\mathbb{R}) $ and $ \Psi(u_0)=0 $, we have $ \Psi(u) = O(|u-u_0|^2) $, as $ | u-u_0 | \rightarrow 0 $. So, we conclude from \eqref{proofThmLiouvilletype2} that $ | \phi'(t) | \leq C | \phi(t) | $ for $ t $ small enough. Since $ \phi(0)=0 $, we must have $ \phi \equiv 0 $ on $ [-\delta, \delta] $, for some $ \delta >0 $. Thus $ V $ is open, which gives that $ V =\mathbb{R}^n $.
\end{proof}
$ \\ $

Also, we have a De Giorgi type property for solutions that attain the equality at a point in the gradient bound obtained in Corollary \ref{CorGradientBoundForFullyNonlinear}. $ \\ $

\begin{theorem}\label{ThmDeGiorgitypeforDeltau=F}
Let $ u $ be an entire solution of
\begin{equation}\label{ThmDeGiorgitypeforEquationDeltau=F}
F(u, \nabla u, \nabla^2 u)=0
\end{equation}
that satisfy assumption \eqref{AssumptionOnSolutionsSecondOrder} and let $ P=P(u,| \nabla u |^2 ) $ be a $ P- $function of \eqref{ThmDeGiorgitypeforEquationDeltau=F} obtained in Lemma \ref{LemmaCriterion2PfunctionsFullyNonlinearEq} that satisfies $ P(u,0) \leq 0 $.
If there exists $ x_0 \in \mathbb{R}^n $ such that
\begin{equation}\label{P=B-Gammaattainszero}
P(u(x_0), | \nabla  u(x_0) |^2 ) = 0
\end{equation}
then there exists a function $ g : \mathbb{R} \rightarrow \mathbb{R} $ such that
\begin{equation}\label{ThmDeGiorgiforDeltau=Fuonedim}
\begin{gathered}
\textrm{either} \;\;\;\: u(x) = g(a \cdot x +b) \;,\; a \in \mathbb{R}^n \;\: \textrm{with} \;\: |a|=1, \;,\: b \in \mathbb{R} \\ \textrm{or} \;\;\;\: u(x) = g(| x-z_0 | +c) \;\:,\; z_0 \in \mathbb{R}^n \;\: \textrm{and} \;\:  c \in \mathbb{R}
\end{gathered}
\end{equation} 
\end{theorem}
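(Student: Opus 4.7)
The plan is to follow the template of Theorem~\ref{AbstractDeGiorgiConsequenceofPfunction} but with two adjustments tailored to the present hypotheses. First, the gradient bound of Theorem~\ref{ThmGradientBoundforDeltau=F} (which applies since $P(s,0) = -\Gamma(s) \leq 0$ for any $P$ of the form \eqref{P=B(t)-Gamma(s)}) gives $P(u,|\nabla u|^2) \leq 0$ on $\mathbb{R}^n$, so the hypothesis \eqref{P=B-Gammaattainszero} really says that the supremum is attained at the interior point $x_0$. Second, because the monotonicity $u_{x_n}>0$ is no longer assumed, the radial branch of the Caffarelli--Crandall classification in \cite{CC} must be retained. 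The key intermediate outputs will be, in order: (i) $P \equiv 0$ on $\mathbb{R}^n$; (ii) an eikonal equation $|\nabla v|^2 = 1$ for a suitable change of variable $v = G(u)$; (iii) the classification of $v$.

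For (i), I would set $A = \lbrace x \in \mathbb{R}^n : P(u(x),|\nabla u(x)|^2) = 0 \rbrace$, which is closed by continuity and non-empty by hypothesis. If $u$ is constant the theorem holds trivially with $g$ constant, so I assume $u$ is non-constant. At any $y \in A$ the equality $B(|\nabla u(y)|^2) = \Gamma(u(y))$ combined with $B(0)=0$ and $B'>0$ forces a dichotomy: either $|\nabla u(y)|=0$, in which case $\Gamma(u(y))=0$ and Theorem~\ref{ThmLiouvilleType} makes $u$ constant (contradiction), or $|\nabla u(y)|>0$. In the latter case, continuity gives $|\nabla u|>0$ on a small ball around $y$, and Theorem~\ref{AbstractConsequenceofPfunction1} applied there yields $P \equiv 0$ on that ball; hence $A$ is open, and connectedness of $\mathbb{R}^n$ gives $A = \mathbb{R}^n$. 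For (ii), I then read off $|\nabla u|^2 = \Psi(u) := B^{-1}(\Gamma(u))$, which is strictly positive everywhere (otherwise the same Liouville argument would make $u$ constant), set $G(s) := \int_{u(x_0)}^{s} dt/\sqrt{\Psi(t)}$ and $v := G(u)$; a direct computation gives $|\nabla v|^2 \equiv 1$ with $v \in C^2(\mathbb{R}^n)$. For (iii), the classification in \cite{CC} yields $v(x) = a \cdot x + b$ with $|a|=1$ or $v(x) = |x-z_0| + c$, and inverting $G$ produces the two alternatives in \eqref{ThmDeGiorgiforDeltau=Fuonedim} with $g = G^{-1}$.

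The single delicate point will be the openness of $A$ at a point $y$ where $|\nabla u(y)|$ might vanish: no such point can exist in the non-constant case precisely because the structural form \eqref{P=B(t)-Gamma(s)} feeds directly into Theorem~\ref{ThmLiouvilleType}. This is the only place where the specific form of $P$ enters essentially, and it is what allows the conclusion to drop the monotonicity hypothesis that appears in Theorem~\ref{AbstractDeGiorgiConsequenceofPfunction}. The remaining parts of the argument—the open/closed propagation and the eikonal reduction—are routine adaptations of that earlier proof, with the only substantive change being that the radial branch in \cite{CC} must now be kept in the final dichotomy.
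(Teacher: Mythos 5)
Your proposal is correct and follows essentially the same route as the paper's own proof: first invoke Theorem~\ref{ThmGradientBoundforDeltau=F} to get $P \leq 0$ globally, then run the closed/open connectedness argument on $A = \{P = 0\}$, using the structural form \eqref{P=B(t)-Gamma(s)} together with Theorem~\ref{ThmLiouvilleType} to dispose of the degenerate case $\nabla u(x_1) = 0$, and finally reduce to the eikonal equation and apply the Caffarelli--Crandall classification, keeping both branches. The only organizational difference is that you dispose of the constant case up front and treat $\nabla u(x_1) = 0$ as a contradiction, whereas the paper lets that case simply yield $u$ constant and $P \equiv 0$ directly; both are fine. One small point worth noting: your change of variable $G(s) = \int_{u(x_0)}^{s} dt/\sqrt{\Psi(t)}$, i.e.\ $G' = 1/\sqrt{\Psi}$, is the correct normalization given that $|\nabla u|^2 = \Psi(u)$; the paper's proof writes $G'(s) = 1/\Psi(s)$, which is a typo carried over from the earlier proof of Theorem~\ref{AbstractDeGiorgiConsequenceofPfunction} where the eikonal relation is $|\nabla u| = Q(u)$ rather than $|\nabla u|^2 = \Psi(u)$.
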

$ \\ $
\begin{proof}
By Corollary \ref{CorGradientBoundForFullyNonlinear}, we have that $ P (u, | \nabla u|^2) \leq 0 $. 

We begin by considering the set
\begin{equation}\label{ThmDeGiorgiforDeltau=FA=(P=0)}
\mathscr{A}= \lbrace x \in \mathbb{R}^n \: : \: P (u, | \nabla u |^2) =0 \rbrace
\end{equation}
$ \mathscr{A} $ is closed and by the assumption $ \mathscr{A} \neq \emptyset $. We are going to prove that $ \mathscr{A} $ is open.

Let $ x_1 \in \mathscr{A} $, if $ \nabla u(x_1) =0 $, we obtain by the form $ P(s,t) = A(t) + B(s) $ that $ P(u(x_1),0)= - B(u(x_1)) =0 $. By Theorem \ref{ThmLiouvilleType}, we conclude that $ u \equiv u(x_1) $ and $ \nabla u \equiv 0 $ and hence $ P \equiv 0 $.

On the other hand, if $ \nabla u(x_1) \neq 0 $, we have $ \inf_{\overline{B}_{\delta_1}(x_1)} | \nabla u | >0 $ for some $ \delta_1 >0 $ 
and by Theorem \ref{AbstractConsequenceofPfunction1} we conclude that $ P(u, | \nabla u |^2) \equiv 0 $ in $ B_{\delta_1} (x_1) $ and therefore $ \mathscr{A} $ is open.

By connectedness, we have that $ \mathscr{A} = \mathbb{R}^n $, that is,
\begin{equation}\label{ThmDeGiorgiforDeltau=F,P=0}
P(u, | \nabla u |^2 ) \equiv 0 \;\;,\; \forall \: x \in \mathbb{R}^n
\end{equation}
and $ P_t = A'(t) >0 $, thus
\begin{equation}\label{ThmDeGiorgiforDeltau=FEikonal}
| \nabla u |^2 = \Psi (u) \;\;\;,\;\; \textrm{in} \;\: \mathbb{R}^n \;\;,\; \textrm{where} \;\: \Psi(u) = A^{-1}(- B (u))
\end{equation}
Now, if there exists $ x_2 \in \mathbb{R}^n $ such that $ \Phi (u(x_2))=0 $, so $ | \nabla u(x_2) |=0 $, again by Theorem \ref{ThmLiouvilleType} we have that $ u \equiv u(x_2) $.

If, on the other hand $ \Psi (u(x)) >0 \;\:,\; \forall \: x \in \mathbb{R}^n $, we set
\begin{equation}\label{ThmDeGiorgiforDeltau=Fv=G(u)Eikonal=1}
\begin{gathered}
v = G(u) \;\;\;,\;\; \textrm{where} \;\: G'(s) = \frac{1}{ \Psi (s)} \\
\textrm{and} \;\: | \nabla v |^2 =1 \;\;\; \textrm{in} \;\: \mathbb{R}^n
\end{gathered}
\end{equation}
Therefore, by the result in \cite{CC}, we have that
\begin{equation}\label{ThmDeGiorgiforDeltau=FEntireSolutionsofEikonalCC}
\begin{gathered}
\textrm{either} \;\: v(x) = a \cdot x +b \;\:,\; a \in \mathbb{R}^n \;\: \textrm{with} \;\: | a |=1 \;\: \textrm{and} \;\:  b \in \mathbb{R} \\
\textrm{or} \;\: v(x) = | x-z_0 | +c \;\:,\; z_0 \in \mathbb{R}^n \;\: \textrm{and} \;\:  c \in \mathbb{R}
\end{gathered}
\end{equation}
So we conclude that
\begin{equation}
\begin{gathered}
\textrm{either} \;\;\;\: 
u(x) = g( a \cdot x + b) \;\:,\; a \in \mathbb{R}^n \;\: \textrm{with} \;\: | a |=1 \;,\;\:  b \in \mathbb{R} \;\: \textrm{where} \;\: g(s) = G^{-1}(s)
\\ \textrm{or} \;\;\;\: u(x) = g(| x-z_0 | +c) \;\:,\; z_0 \in \mathbb{R}^n \;\: \textrm{and} \;\:  c \in \mathbb{R}
\end{gathered}
\end{equation}
\end{proof}

\begin{remark}\label{RmkConstancyofPfunction} Note that if $ u : \Omega \rightarrow \mathbb{R} $ where $ \Omega $ is an open and connected domain in $ \mathbb{R}^n $ and $ P = P(u,| \nabla u |^2) =A(| \nabla u |^2) + B(u) $ with $ A'>0 $ that attains its maximum at a point then $ u $ will be a solution of the Eikonal equation $ | \nabla u|^2 = \Psi(u) $. If in addition $ u_{x_n}>0 $ and consider $ F_i = \frac{u_{x_i}}{u_{x_n}} $, by Proposition 2.1 in \cite{G}, the function $ F=(F_1,...,F_{n-1}) $ will satisfy the Isobaric Euler equation.
\end{remark}
$ \\ $

\section{A Harnack-type inequality and Local Estimates for the gradient}

We will establish a Harnack inequality and local estimates for the gradient of solutions to to fully nonlinear elliptic equations in a domain $ \Omega \subset \mathbb{R}^n $. By Lemma \ref{LemmaCriterionPfunctionsFullyNonlinearEq} with $ g= 0$ or  by Lemma \ref{LemmaCriterion2PfunctionsFullyNonlinearEq} with $ B=0 $, we can utilize the elliptic inequality \eqref{ProofLemmaGeneralCritFulNonEq3} or \eqref{ProofLemmaCrit2Eq5} respectively and assuming that $ F_{a_{ij}} \in L^{\infty}(\Omega) $, we can apply Local Estimates for subsolutions to general elliptic operators to the $ P- $function and obtain similar local estimates for the gradient of solutions to fully nonlinear equations. $ \\ $

So, we consider solutions of the equation
\begin{equation}\label{HarnackSectionFullyNonlinearEq}
F(x,u,\nabla u, \nabla^2 u) = 0
\end{equation}
where $ F : \Omega \times \mathbb{R} \times \mathbb{R}^n \times \mathbb{R}^{n \times n} \rightarrow \mathbb{R} $ is a continuous function and satisfies the ellipticity condition \eqref{EllipticityConditionForFullyNonlinearElEq}. We denote $ F = F(x,s,q, A) \;\:,\; F_{q_i}= \dfrac{\partial F}{\partial q_i} \;,\; F_{a_{ij}} = \dfrac{\partial F}{\partial a_{ij}} $.

In this section we will assume the bound
\begin{equation}\label{BoundAssumptionHarnackSection}
| F_{a_{ij}} |, | F_{q_i} |, |F_s| \leq M .
\end{equation}
$ \\ $

We first establish a local pointwise estimate for the gradient of solutions.
$ \\ $
\begin{theorem}\label{thmlocalpointwiseestimateofGradientonFullyNonlinear}
Let $ u : \Omega \rightarrow \mathbb{R} $ be a smooth solution of \eqref{HarnackSectionFullyNonlinearEq} and assume $ \nabla_x F \cdot \nabla u \leq 0 $. Then for any $ B_{2R} \subset \Omega $ and $ p >0 $
\begin{equation}\label{thmlocalpointwiseestimateofGradientonFullyNonlinearEstimate}
\sup_{B_R} | \nabla u |^2 \leq C \frac{|| \nabla u  ||^2_{L^{2p}(B_{2R})}}{| B_{2R} |^{1/p}}
\end{equation}
where $ C = C(n,p ,\lambda, \Lambda, MR^2) $ is a positive constant.
\end{theorem}
$ \\ $
\begin{proof} Set $ P=| \nabla u |^2 $, by Lemma \ref{LemmaCriterionPfunctionsFullyNonlinearEq} and the assumption $ \nabla_x F \cdot \nabla u \leq 0 $ we have
\begin{equation}\label{ProofthmlocalpointestofGradonFullyNonlinearEq1}
\sum_{i,j} d_{ij} P_{x_i x_j} + \nabla_q F \nabla_x P + 2 F_u P \geq 0
\end{equation}
where $ d_{ij} = F_{a_{ij}} $.
Thus, by classical local pointwise estimates for subsolutions of elliptic equations (see for example Theorem 9.20 in \cite{GT}) we obtain \eqref{thmlocalpointwiseestimateofGradientonFullyNonlinearEstimate}.
\end{proof}
$ \\ $

Next, we obtain the following Harnack estimate $ \\ $

\begin{theorem}\label{thmHarnackIneqForFullyNonlinearEq}
Let $ u : \Omega \rightarrow \mathbb{R} $ be a smooth solution of \eqref{HarnackSectionFullyNonlinearEq} and assume $ \nabla_x F \cdot \nabla u \geq 0 $ , $  | \nabla^2 u |^2 \in L^n(\Omega) $. 
Then for any $ B_{2R} \subset \Omega $ there holds that for $ p >0 $,
\begin{equation}\label{thmHarnackInFullyNonlinearStatement}
\left( \frac{1}{| B_R |} \int_{B_R} | \nabla u |^{2p} \right)^{\frac{1}{p}} \leq C \left( \inf_{B_R} | \nabla u |^2 + \frac{R}{\lambda} || \nabla^2 u||_{L^{2n}(B_{2R})}^2 \right) ,
\end{equation}
where $ C = C(n,p, \lambda, \Lambda, M R^2) .$
\end{theorem}
$ \\ $
\begin{proof} Set $ P=| \nabla u |^2 $. We argue as in the proof of Lemma \ref{LemmaCriterionPfunctionsFullyNonlinearEq} and by the assumption $ \nabla_x F \cdot \nabla u \geq 0 $ to obtain
\begin{equation}\label{ProofthmHarnackIneqForFullyNonlinearEq1}
\sum_{i,j} d_{ij} P_{x_i x_j} + \nabla_q F \nabla_x P + 2 F_u P \leq 2 \Lambda | \nabla^2 u |^2
\end{equation}
So, by the Harnack inequality for supersolutions of elliptic equations (see Theorem 9.22 in \cite{GT} for instance) we conclude.
\end{proof}
$ \\ $

In the end of this section, we conclude with the following Harnack inequality for the Monge-Ampère equation 
$ \\ $
\begin{corollary}\label{CorollaryHarnackForMongeAmpere} Let $ u : \Omega \rightarrow \mathbb{R} $ be a smooth and convex solution of
\begin{equation}\label{CorHarnMongeAmpereEq}
det( \nabla^2 u) = f(u, \nabla u)
\end{equation}
where $ f > 0 $ and assume $ | \nabla^2 u |^2 \in L^n(\Omega) $ and $ | f_{q_i} |, | adj^T(\nabla^2 u)_{ij} | \leq M . $

Then for any $ B_{2R} \subset \Omega $ we have
\begin{equation}\label{CorHarnMongeAmpereEstimate}
\sup_{B_R} | \nabla u |^2 \leq C \left( \inf_{B_R} | \nabla u |^2 + \frac{R}{\lambda} || \nabla^2 u||_{L^{2n}(B_{2R})}^2  \right)
\end{equation}
where $ C= C(n,\lambda, \Lambda,MR^2) . $
\end{corollary}
\begin{proof}
The proof is a consequence of Theorem \ref{thmlocalpointwiseestimateofGradientonFullyNonlinear} and Theorem \ref{thmHarnackIneqForFullyNonlinearEq} and since $ \frac{\partial F}{\partial a_{ij}}(A) = adj^T(A)_{ij} $, for $ F(A)= det(A) $ by Jacobi's formula.
\end{proof}

$ \\ $

\section{Higher order nonlinear equations}

In this last section, we will provide examples of $ P -$functions for higher order nonlinear equations and their applications. In particular, an analogous version of Theorems \ref{ThmGradientBoundforDeltau=F} and \ref{theoremRigidityForFullyNonlinear}, allow us to obtain properties and pointwise estimates of entire solutions even in this case. Moreover, we establish local pointwise estimates for nonlinear equations of order greater than two, through the mean value properties of the $ P- $functions or with analogous arguments to that of section 5, applied in higher order equations. This method can be applied to many other classes of higher order nonlinear equations. 

We begin by stating the analogous Theorem \ref{ThmGradientBoundforDeltau=F} for equations of general order. $ \\ \\ $
\textbf{Assumption}
\begin{equation}\label{AssumptionHigherOrderPDEs}
\begin{gathered}
u \in C^m(\mathbb{R}^n) \cap L^{\infty} (\mathbb{R}^n) \;\;,\; \nabla^{m-1}u \in C^{\alpha}_{loc}(\mathbb{R}^n) \;\: \textrm{for some} \;\: \alpha \in (0,1) \\
\textrm{and there exists} \;\: C >0 \;\: \textrm{such that} \;\: | \nabla^l u | \leq C \;\:,\; l=1,...,m-1.
\end{gathered}
\end{equation}
$ \\ $

\begin{theorem}\label{ThmBoundForHigherOrderPDEs}
Let $ u $ be an entire solution or subsolution of
\begin{equation}\label{ThmBoundHigherOrderGeneralEq}
F(x, u, \nabla u,..., \nabla^m u) =0 
\end{equation}
that satisfies assumption \eqref{AssumptionHigherOrderPDEs} and let $ P = P(u,..., \nabla^{m-1}u) =P(u;x) $ be a $ P- $function of \eqref{ThmBoundHigherOrderGeneralEq} such that one of the following holds: $ \\ $
(i) $ \mu = \mu (g(\nabla^k u)) $ for some $ g :\mathbb{R}^{n^k} \rightarrow \mathbb{R} \;,\: g(z) >0 \;, \forall \: z \neq 0 \;,\: g((0,...,0))=0 \;, \\ \mu(t) >0 \;,\: \forall \: t>0 $ and $ P(u;x) \leq 0 \;, $ when $ \nabla^k u=(0,...,0) \;,\; k \in \lbrace 1,...,m-1 \rbrace \: , \\ $
(ii) $ \mu = \mu (g(\nabla^k u)) $ for some $ g :\mathbb{R}^{n^k} \rightarrow \mathbb{R} \;,\: g(z) >0 \;, \forall \: z \neq 0 \;,\: g((0,...,0))=0 \;, \\ \mu(t) >0 \;,\: \forall \: t>0 \;,\: P(u;x) \leq 0 \;, $ when $ \nabla^l u=(0,...,0) \;,\: k \neq l \:,\; k,l \in \lbrace 1,...,m-1 \rbrace $ and $ g(\nabla^k u)>0 \;,\: \forall \; x \in \mathbb{R}^n .$

Then $ P(u,...,\nabla^{m-1}u) \leq 0 \;\: \forall \: x \in \mathbb{R}^n. $

\end{theorem}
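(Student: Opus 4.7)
The plan is to mirror the proof of Theorem \ref{ThmGradientBoundforDeltau=F}, replacing $|\nabla\cdot|$ by $g(\nabla^k\cdot)$ and upgrading the compactness argument from $C^1$ to $C^{m-1}$. Introduce the translation-closed family
\[
\mathcal{F} = \{v : v \text{ is an entire (sub)solution of \eqref{ThmBoundHigherOrderGeneralEq} satisfying \eqref{AssumptionHigherOrderPDEs} with the same bounds as } u\},
\]
set $P_0 = \sup\{P(v;x) : v \in \mathcal{F},\, x \in \mathbb{R}^n\}$, and argue by contradiction by assuming $P_0 > 0$. Choose $(v_j, x_j)$ with $P(v_j; x_j) \to P_0$, translate to $\tilde v_j(\cdot) = v_j(\cdot + x_j) \in \mathcal{F}$ (using translation invariance of $F$), and apply Ascoli--Arzel\`a with a diagonal argument: the H\"older control on $\nabla^{m-1}\tilde v_j$ together with the uniform bounds on $\nabla^l \tilde v_j$ for $l=0,1,\dots,m-1$ coming from \eqref{AssumptionHigherOrderPDEs} give equicontinuity of all these derivatives on compact sets, producing a limit $\tilde v \in \mathcal{F}$ in $C^{m-1}_{loc}$ with $P(\tilde v; 0) = P_0$.

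Next I would show that the set $U = \{x \in \mathbb{R}^n : P(\tilde v; x) = P_0\}$ is both open and closed, hence all of $\mathbb{R}^n$ by connectedness. It is closed by continuity of $P$ and nonempty since $0 \in U$. For openness, fix $x_0 \in U$. In case (i), since $P_0 > 0$, the hypothesis $P(\tilde v;x_0) \leq 0$ whenever $\nabla^k\tilde v(x_0)=0$ forces $\nabla^k \tilde v(x_0) \neq 0$; by continuity $g(\nabla^k\tilde v) > 0$ on some $\overline{B_\delta(x_0)}$, so $\mu > 0$ there, and Theorem \ref{AbstractConsequenceofPfunction1} gives $P \equiv P_0$ on $B_\delta(x_0)$. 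In case (ii) the standing assumption $g(\nabla^k\tilde v) > 0$ holds globally, so $\mu > 0$ on every ball and the same theorem applies without any sign check. In either case $U$ is open, and consequently $P(\tilde v; \cdot) \equiv P_0$ throughout $\mathbb{R}^n$.

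To reach the contradiction I would exhibit a sequence $y_j \in \mathbb{R}^n$ along which the argument of $P$ converges to a configuration for which the hypothesis forces $P \leq 0$. Using boundedness of $\tilde v$ and its derivatives of order $\leq m-1$, pass to a subsequence on which $(\tilde v, \nabla \tilde v, \dots, \nabla^{m-1}\tilde v)(y_j)$ converges; the task is then to arrange that $\nabla^k\tilde v(y_j) \to 0$ in case (i), or that $\nabla^l\tilde v(y_j) \to 0$ for some $l \neq k$ in case (ii). For first-order quantities this follows from the classical fact that a bounded scalar function $f$ on $\mathbb{R}^n$ must satisfy $\inf|\nabla f| = 0$, applied to $\tilde v$ itself (choosing $l=1$ in case (ii) makes this immediate). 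Passing to the limit in $P(\tilde v; y_j) = P_0$ and invoking the hypothesis at the limit configuration then yields $P_0 \leq 0$, contradicting $P_0 > 0$. The main obstacle lies in case (i) when $k \geq 2$: the naive vector-valued analogue of ``bounded primitive forces infimum of derivative to vanish'' fails (for instance $(\cos x, \sin x)$ is bounded but has derivative of constant norm $1$), so simultaneous vanishing of all components of $\nabla^k \tilde v$ along a single sequence must be extracted from the specific structure of $g$ and from a careful componentwise minimization of $|\nabla^k \tilde v|$ exploiting the boundedness of $\nabla^{k-1}\tilde v$; this is the delicate point of the argument.
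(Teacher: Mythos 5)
Your proposal replicates the structure of the proof of Theorem \ref{ThmGradientBoundforDeltau=F} (translation-closed family $\mathcal{F}$, extremizing sequence, Arzel\`a--Ascoli on $C^{m-1}_{loc}$, openness of $\{P=P_0\}$ via Theorem \ref{AbstractConsequenceofPfunction1}, then a contradiction along a degenerating sequence), which is exactly what the paper indicates by ``similar \ldots with minor modifications.'' Your treatment of the open/closed step is correct in both cases, including the observation that in case (i) the point $x_0\in U$ cannot have $\nabla^k\tilde v(x_0)=0$ since $P_0>0$, and that in case (ii) the standing hypothesis $g(\nabla^k u)>0$ bypasses that check entirely.

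One small misreading: in case (ii) the index $l$ is fixed by the hypothesis (there is \emph{some} $l\neq k$ for which $P\leq 0$ when $\nabla^l u=0$), so you do not get to ``choose $l=1$''; in the paper's own application (Proposition \ref{P-functionsforHigherOrder}) one has $k=1$, $l=2$, so the easy scalar fact $\inf|\nabla\tilde v|=0$ does not directly apply.

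The issue you flag at the end is the right one and it is genuine: the final contradiction in the proof of Theorem \ref{ThmGradientBoundforDeltau=F} rests on ``$\tilde v$ bounded $\Rightarrow\inf_{\mathbb{R}^n}|\nabla\tilde v|=0$'', a scalar fact. The higher-order analogue, namely that boundedness of $\nabla^{k-1}\tilde v$ forces $\inf_{\mathbb{R}^n}|\nabla^k\tilde v|=0$, is \emph{not} a consequence of the scalar argument applied componentwise (each component gives its own sequence), and the naive vector-valued statement fails as your example $(\cos x,\sin x)$ shows. So as literally stated, for $k\geq 2$ in case (i) (or $l\geq 2$ in case (ii)) this step is a gap that the paper's one-line proof does not close. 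It is worth recording, however, that in the paper's actual applications the relevant argument of $P$ is the \emph{scalar} $\Delta u$ rather than the full tensor $\nabla^2 u$, and for that quantity the conclusion is recoverable: if $\tilde v\in L^\infty(\mathbb{R}^n)$ and $|\Delta\tilde v|\geq c>0$ on all of $\mathbb{R}^n$, then by continuity $\Delta\tilde v$ has a fixed sign and $\tilde v\mp\frac{c}{2n}|x|^2$ is sub-/super-harmonic, contradicting boundedness via the mean value inequality; hence $\inf|\Delta\tilde v|=0$ and a sequence $y_j$ with $\Delta\tilde v(y_j)\to 0$ exists. So Proposition \ref{P-functionsforHigherOrder} and Corollary \ref{CorLiouvilleForDelta^2=c|Hes u|^2} go through, but the general statement of Theorem \ref{ThmBoundForHigherOrderPDEs} either needs this extra structural hypothesis (that the dependence of $P$ on $\nabla^k u$ factors through a quantity like $\Delta u$ for which an $\inf=0$ lemma is available) or a new argument for $\inf|\nabla^k\tilde v|=0$. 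Your instinct that this is ``the delicate point'' is exactly right; you should not present the proposal as complete without either restricting $k$ (resp. $l$) to $1$ or supplying the missing lemma.
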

\begin{proof}
The proof is similar to that of Theorem \ref{ThmGradientBoundforDeltau=F} with minor modifications.
\end{proof}

$ \\ $

We now provide the generalization of Theorem \ref{theoremRigidityForFullyNonlinear} in the higher order case.
$ \\ $
\begin{theorem}\label{ThmLiouvilleForHigherOrderPDEs}
Let $ u $ be an entire solution of
\begin{equation}\label{ThmLiouvilleForHigherOrderGeneralEq}
F(x, u, \nabla u,..., \nabla^m u) =0 
\end{equation}
and let $ P = P(u,..., \nabla^{m-1}u) =P(u;x) $ be a $ P- $function of \eqref{ThmBoundHigherOrderGeneralEq} such that $ \mu = \mu (g(\nabla^k u)) $ for some $ g :\mathbb{R}^{n^k} \rightarrow \mathbb{R} \;,\: g(z) >0 \;, \forall \: z \neq 0 \;,\: g((0,...,0))=0 \;,\: \mu(t) >0 \;,\: \forall \: t>0 $ and
\begin{equation}\label{ThmLiouvilleForHigherOrderPfunctionform}
\begin{gathered}
P = H( \nabla^k u) \;\;,\; \textrm{where} \;\: H \: : \mathbb{R}^{n^k} \rightarrow [0, +\infty) \\ \textrm{and} \;\: \lbrace H = 0 \rbrace = \lbrace 0 \in \mathbb{R}^{n^k} \rbrace \;\:,\: k \in \lbrace 1,...,m-1 \rbrace
\end{gathered}
\end{equation}

Then $ \nabla^{k-1} u $ is a constant.
\end{theorem}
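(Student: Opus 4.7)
The plan is to apply Theorem \ref{ThmBoundForHigherOrderPDEs}(i) directly and then read off the conclusion from the shape of $P$. The hypotheses of that theorem are tailored to exactly the data that we have: the function $g$, the weight $\mu$ and the assumption $\nabla^{k-1}u$ constant are all compatible with the nondegeneracy condition $g(z)>0$ for $z\neq 0$, $\mu(t)>0$ for $t>0$. The only nontrivial clause to check is the sign condition $P(u;x)\le 0$ when $\nabla^k u=(0,\dots,0)$, but this is immediate from the form $P=H(\nabla^k u)$: at such a point $P=H(0)=0\le 0$, since $H\ge 0$ and $\{H=0\}=\{0\}$.

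First I would quote Theorem \ref{ThmBoundForHigherOrderPDEs}(i), which yields
\begin{equation*}
P(u, \nabla u,\dots,\nabla^{m-1}u)=H(\nabla^k u)\le 0\quad\text{for all } x\in\mathbb{R}^n.
\end{equation*}
Combining this with $H\ge 0$ on $\mathbb{R}^{n^k}$ forces $H(\nabla^k u(x))=0$ for every $x\in\mathbb{R}^n$. Then the characterization $\{H=0\}=\{0\}$ gives $\nabla^k u(x)=0$ identically, so each component of $\nabla^{k-1}u$ has vanishing gradient on $\mathbb{R}^n$, and hence $\nabla^{k-1}u$ is constant.

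There is essentially no technical obstacle: the proof is a straightforward corollary of Theorem \ref{ThmBoundForHigherOrderPDEs}(i), in exact parallel with how Theorem \ref{PropLiouvillepropertyforP=g(gradu)} was obtained from Theorem \ref{ThmGradientBoundforDeltau=F}. The one subtlety is that the statement does not explicitly invoke the regularity assumption \eqref{AssumptionHigherOrderPDEs} although the invoked theorem requires it; I would note that this assumption is tacitly in force for the higher order results of this section, so that the compactness/Ascoli--Arzel\`a argument inside the proof of Theorem \ref{ThmBoundForHigherOrderPDEs} can be applied. Beyond that, the argument is a one-line deduction from the maximum principle encoded in the previous theorem together with the algebraic structure $P=H(\nabla^k u)$.
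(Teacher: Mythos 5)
Your proof is correct and takes essentially the same route as the paper: verify the sign condition of Theorem \ref{ThmBoundForHigherOrderPDEs}(i) using $P = H(0) = 0$ when $\nabla^k u$ vanishes, conclude $P \leq 0$ everywhere, combine with $H \geq 0$ to force $H(\nabla^k u) \equiv 0$ and hence $\nabla^k u \equiv 0$. Your remark that assumption \eqref{AssumptionHigherOrderPDEs} must be tacitly in force (since Theorem \ref{ThmBoundForHigherOrderPDEs} requires it but the statement of Theorem \ref{ThmLiouvilleForHigherOrderPDEs} omits it) correctly flags a small gap in the paper's stated hypotheses.
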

\begin{proof}
The proof is direct consequence of Theorem \ref{ThmBoundForHigherOrderPDEs} since $ P=H( \nabla^k u) = 0 $ when $ \nabla^k u $ vanish which gives $ P \equiv 0 $ in $ \mathbb{R}^n $.
\end{proof}

$ \\ $

\subsection{Local and Global Pointwise estimates}

The arguments of section 5, can be applied for higher order nonlinear equations. In this case, we extract local and global estimates for higher order of derivatives of $ u $, such as for the Laplacian. $ \\ $

\begin{proposition}\label{PropHarnackforHigherorder}
Let $ u : \Omega \rightarrow \mathbb{R} $ be a smooth and convex subsolution of 
\begin{equation}\label{PropHarnackForDelta^2u=..Equation}
\Delta^2 u - F(x,u,\nabla u, \nabla^2 u, \nabla^3 u) =0 \;\;,\; \textrm{with} \;\: F \geq 0
\end{equation}

Then for any $ B_R \subset \Omega $, any $ 0 < r<R $ and any $ p \geq 1 $,
\begin{equation}\label{PropHarnackForDelta^2u=..Statement}
\sup_{B_r} ( \Delta u)^2 \leq \frac{C}{(R-r)^{n/p}} || \Delta u||^2_{L^{2p}(B_R)}
\end{equation}
where $ C=C(n,p) $.
\end{proposition}
\begin{proof}
\begin{equation}\label{ProofPropHarnackforHigherorderEq1}
\begin{gathered}
P_{x_i} = 2 \Delta u \Delta u_{x_i} \\
P_{x_i x_i} = 2 ( \Delta u_{x_i})^2 + 2 \Delta u \Delta u_{x_i x_i} \\
\Rightarrow \Delta P = 2| \nabla \Delta u |^2 + 2 \Delta u \Delta^2 u \geq 2| \nabla \Delta u |^2 + F \Delta u \geq 0
\end{gathered}
\end{equation}
Therefore, by classical estimates for subsolutions of elliptic equations he conclude (see for example Theorem 4.14 in \cite{HL}).
\end{proof}

$ \\ $

Furthermore, we give some examples of $ P -$functions of the form $ P= P(u, |\nabla u|, \Delta u) $ related to forth order nonlinear equations together with applications. $ \\ $

\begin{proposition}\label{P-functionsforHigherOrder}
Let $ u $ be a smooth solution of
\begin{equation}\label{ForthOrderPDE}
\begin{gathered}
a( \Delta u) [ | \nabla u |^2 \Delta^2 u - \Delta u ( \nabla u \cdot \nabla \Delta u)] = b(u) | \nabla u |^4 \\ \textrm{where} \;\: a,b \: : \mathbb{R} \rightarrow \mathbb{R} \;\: \textrm{and} \;\: a>0 \;,\; a' \geq 0
\end{gathered}
\end{equation}
and set $ P(s,t) = A(t) -B(s) $ such that $ A'=a $ and $ B''=b $.

Then $ P= P(u, \Delta u) = A(\Delta u) - B(u) $ is a $ P- $function of \eqref{ForthOrderPDE}.
$ \\ $
In addition, if $ u $ satisfies \eqref{AssumptionHigherOrderPDEs} with $ m=4 \;,\: B(u) \geq 0 $ and $ u_{x_n} >0 $, then
\begin{equation}\label{PropP-functionHigherOrderLaplacianBound}
\Delta u \leq \Gamma (u) \;\;\: \forall \: x \in \mathbb{R}^n, \;\: \textrm{where} \;\: \Gamma (u) = A^{-1}(B(u)) .
\end{equation}
\end{proposition}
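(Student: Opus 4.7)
\textbf{Step 1: Set up the first order identity.} First I would compute
\begin{equation*}
\nabla P = a(\Delta u)\,\nabla \Delta u - B'(u)\,\nabla u,
\end{equation*}
and take the inner product with $\nabla u$ to obtain
\begin{equation*}
a(\Delta u)\,\nabla \Delta u \cdot \nabla u = \nabla P \cdot \nabla u + B'(u)\,|\nabla u|^2.
\end{equation*}
This is the substitution that will let me recycle the PDE \eqref{ForthOrderPDE}.

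\textbf{Step 2: Expand $\Delta P$ and eliminate $\Delta^2 u$.} Next I compute
\begin{equation*}
\Delta P = a'(\Delta u)\,|\nabla \Delta u|^2 + a(\Delta u)\,\Delta^2 u - b(u)|\nabla u|^2 - B'(u)\,\Delta u,
\end{equation*}
using $B'' = b$. Multiplying through by $|\nabla u|^2$ and substituting the value of $a(\Delta u)|\nabla u|^2 \Delta^2 u$ coming from \eqref{ForthOrderPDE}, the $b(u)|\nabla u|^4$ terms cancel and the remaining $a(\Delta u)\Delta u(\nabla u \cdot \nabla \Delta u) - B'(u)\Delta u|\nabla u|^2$ collapses, via Step 1, to $\Delta u\,(\nabla P \cdot \nabla u)$. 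The net identity should be exactly
\begin{equation*}
|\nabla u|^2 \Delta P - \Delta u\,(\nabla u \cdot \nabla P) = a'(\Delta u)\,|\nabla u|^2\,|\nabla \Delta u|^2 \geq 0,
\end{equation*}
where non-negativity uses the hypothesis $a' \geq 0$. This shows that with $\mu = |\nabla u|^2$ and the elliptic operator $L = -\Delta + \sum_i b_i\,\partial_{x_i}$ with $b_i = \Delta u\cdot u_{x_i}/|\nabla u|^2$ (defined wherever $|\nabla u|>0$, and with $c=0$), one has $\mu L P \leq 0$. Hence $P$ is a $P$-function of \eqref{ForthOrderPDE} in the sense of Definition \ref{DefinitionP-function}.

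\textbf{Step 3: Apply the higher order bound.} For \eqref{PropP-functionHigherOrderLaplacianBound}, normalize $A$ by choosing the constant of integration so that $A(0)=0$; this is harmless since $\Gamma = A^{-1}\circ B$ is insensitive to translating $A$ and $B$ by the same constant. The hypothesis $u_{x_n} > 0$ forces $|\nabla u| \geq u_{x_n} > 0$ throughout $\mathbb{R}^n$, so with $k=1$ and $g(z)=|z|$ the positivity condition $g(\nabla u)>0$ of Theorem \ref{ThmBoundForHigherOrderPDEs}(ii) holds. The only remaining derivative order on which $P$ depends is $l=2$ (through $\Delta u$); when $\nabla^2 u = 0$ we have $\Delta u = 0$, so $P(u;x) = A(0) - B(u) = -B(u) \leq 0$ by the assumption $B\geq 0$. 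Theorem \ref{ThmBoundForHigherOrderPDEs}(ii) then yields $A(\Delta u) \leq B(u)$ on $\mathbb{R}^n$, and since $A' = a > 0$ makes $A$ strictly increasing and invertible on its range, inverting gives $\Delta u \leq A^{-1}(B(u)) = \Gamma(u)$.

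\textbf{Expected obstacle.} The only delicate point is the bookkeeping in Step 2: one must track three cancellations (the $b(u)|\nabla u|^4$ terms from $\Delta A(\Delta u)$ via the PDE against $\Delta B(u)$, and the regrouping of the remaining mixed terms into $\Delta u\,\nabla P\cdot\nabla u$) before the sign condition $a'\geq 0$ can be invoked. If any of those cancellations fails, the ellipticity of $L$ is lost. Everything else is a direct invocation of the theorems already established.
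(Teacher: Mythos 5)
Your proof is correct and follows essentially the same computation as the paper: you derive the same identity $|\nabla u|^2 \Delta P - \Delta u\,(\nabla u \cdot \nabla P) = a'(\Delta u)\,|\nabla u|^2\,|\nabla \Delta u|^2 \ge 0$ and then apply Theorem \ref{ThmBoundForHigherOrderPDEs}. One small improvement over the paper: you invoke hypothesis (ii) of that theorem (with $k=1$, $l=2$, and $g(\nabla u)=|\nabla u|>0$ forced by $u_{x_n}>0$), whereas the paper cites (i) — but since $\mu$ depends on $\nabla u$ while the $P\le 0$ condition is triggered by vanishing of $\nabla^2 u$, (ii) is indeed the applicable case, so your reading is the careful one.
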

\begin{proof}
We have
\begin{equation}\label{P-functionsforHigherOrderproofeq1}
P_{x_i} = P_s u_{x_i} + P_t \Delta u_{x_i}
\end{equation}
and so,
\begin{equation}\label{P-functionsforHigherOrderproofeq2}
\begin{gathered}
\Delta u (\nabla P \cdot \nabla u) = P_s | \nabla u |^2 \Delta u + P_t \Delta u \sum_{i=1}^n u_{x_i} \Delta u_{x_i} \\ \Leftrightarrow
- B'(u) | \nabla u |^2 \Delta u = \Delta u (\nabla P \cdot \nabla u) - A'(\Delta u)\Delta u \sum_{i=1}^n u_{x_i} \Delta u_{x_i}  
\end{gathered}
\end{equation}
on the other hand we have
\begin{equation}\label{P-functionsforHigherOrderproofeq3}
\begin{gathered}
P_{x_i x_i} = P_{ss} u_{x_i}^2 + 2 P_{st} u_{x_i} \Delta u_{x_i} + P_{tt} ( \Delta u_{x_i})^2 + P_s u_{x_i x_i} + P_t \Delta u_{x_i x_i} \\
\Rightarrow  \Delta P = (-B''(u)) | \nabla u |^2 + A''( \Delta u) \sum_{i=1}^n (\Delta u_{x_i})^2 - B'(u) \Delta u + A'( \Delta u) \Delta^2 u
\end{gathered}
\end{equation}
and by \eqref{P-functionsforHigherOrderproofeq2} and the assumptions of $ A $ and $ B $, \eqref{P-functionsforHigherOrderproofeq3} becomes
\begin{equation}\label{P-functionsforHigherOrderproofeq4}
| \nabla u |^2 \Delta P - \Delta u (\nabla P \cdot \nabla u) \geq a( \Delta u) [ | \nabla u |^2 \Delta^2 u - \Delta u ( \nabla u \cdot \nabla \Delta u)] - b(u) | \nabla u |^4 =0 
\end{equation}

For the bound of the Laplacian, we have $ P(u,0) = - B(u) \leq 0 $ and $ \mu = | \nabla u |^2 >0 \;\: \forall \: x \in \mathbb{R}^n $ since $ u_{x_n}>0 $, so the assumption (i) in Theorem \ref{ThmBoundForHigherOrderPDEs} is satisfied and we conclude.
\end{proof}

$ \\ $
\begin{proposition}\label{P-functionFor|Hes u|^2=..} Let $ u $ be a smooth solution of
\begin{equation}\label{|Hes u|^2=..}
\begin{gathered}
| \nabla^2 u|^2 = F(u, | \nabla u |^2, \Delta u) + \frac{u}{2} \Delta^2 u \\
\textrm{where} \;\: F : \mathbb{R}^3 \rightarrow \mathbb{R} \; \textrm{is such that} \;\: F(s,t,w) \geq \frac{1}{2}w^2.
\end{gathered}
\end{equation}

Then $ P=P(u, | \nabla u|^2 , \Delta u) = | \nabla u |^2 -u \Delta u $ is a $ P- $function of \eqref{|Hes u|^2=..}.
$ \\ $
In addition, if $ u $ is non negative, convex solution of \eqref{|Hes u|^2=..} that satisfies assumption \eqref{AssumptionHigherOrderPDEs}, then
\begin{equation}\label{P-functionFor|Hes u|^2=.GradientBound}
| \nabla u |^2 \leq u \Delta u \;\;\;,\; \forall \; x \in \mathbb{R}^n.
\end{equation}
\end{proposition}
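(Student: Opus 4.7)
The plan is to verify both conclusions by a direct computation of $\Delta P$ for $P = |\nabla u|^2 - u\,\Delta u$ and then an appeal to Theorem \ref{ThmBoundForHigherOrderPDEs}. First I would differentiate
\[
P_{x_i} \;=\; 2\sum_{j=1}^{n} u_{x_j}\,u_{x_j x_i} \;-\; u_{x_i}\,\Delta u \;-\; u\,(\Delta u)_{x_i},
\]
take one more derivative, and sum on $i$, at which point the two occurrences of $\nabla u \cdot \nabla(\Delta u)$ (one arising from $|\nabla u|^2$, the other from $u\,\Delta u$) appear with opposite signs and cancel exactly. What remains is
\[
\Delta P \;=\; 2|Hes\,u|^2 \;-\; (\Delta u)^2 \;-\; u\,\Delta^2 u.
\]

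Next I would use the PDE to replace $|Hes\,u|^2$ by $F(u,|\nabla u|^2,\Delta u) + \tfrac{u}{2}\Delta^2 u$; the fourth-order term cancels and I am left with
\[
\Delta P \;=\; 2F(u,|\nabla u|^2,\Delta u) \;-\; (\Delta u)^2 \;\geq\; 0,
\]
by the standing hypothesis $F(s,t,w) \geq \tfrac{1}{2}w^2$. Taking $L = -\Delta$ (so $a_{ij}=\delta_{ij}$, $b_i=0$, $c=0$) and any admissible non-negative weight $\mu$, the inequality $\mu\,LP \leq 0$ holds, so $P$ qualifies as a $P$-function in the sense of Definition \ref{DefinitionP-function}.

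For the pointwise bound I would invoke Theorem \ref{ThmBoundForHigherOrderPDEs}(i) with $m=4$, $k=1$, $g(\nabla u) = |\nabla u|^2$, and $\mu \equiv 1$. The a priori regularity required by that theorem is provided by assumption \eqref{AssumptionHigherOrderPDEs}. Convexity of $u$ gives $\Delta u \geq 0$ pointwise, and together with $u \geq 0$ this forces
\[
P(u(x),0,\Delta u(x)) \;=\; -\,u(x)\,\Delta u(x) \;\leq\; 0 \quad \text{whenever } \nabla u(x)=0,
\]
which is exactly the sign condition in part (i). The theorem then yields $P \leq 0$ throughout $\mathbb{R}^n$, i.e.\ $|\nabla u|^2 \leq u\,\Delta u$.

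The only delicate step is the cancellation that produces the clean expression for $\Delta P$; everything else is bookkeeping. A minor presentational matter is writing the constant weight $\mu \equiv 1$ as an instance of $\mu \circ g$ to fit the exact format of the hypothesis of Theorem \ref{ThmBoundForHigherOrderPDEs}, but this is cosmetic because $\Delta P \geq 0$ already holds everywhere (not only where $\nabla u \neq 0$), so any non-negative choice of $\mu$ is admissible.
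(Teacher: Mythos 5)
Your proposal is correct and follows essentially the same route as the paper: the identical computation of $P_{x_i}$ and $\Delta P$ with the third-order terms $\nabla u \cdot \nabla \Delta u$ cancelling, substitution of the PDE to get $\Delta P = 2F - (\Delta u)^2 \geq 0$, and then an appeal to Theorem \ref{ThmBoundForHigherOrderPDEs}(i) using $u\geq 0$ and $\Delta u \geq 0$ (convexity) to verify the sign condition $P(u,0,\Delta u) = -u\Delta u \leq 0$. Your remark making $g(\nabla u)=|\nabla u|^2$ and $\mu\equiv 1$ explicit is a harmless formalization of what the paper leaves implicit.
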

\begin{proof}
We have that
\begin{align*}
P_{x_i} = 2 \sum_{j=1}^n u_{x_j} u_{x_j x_i} - u_{x_i} \Delta u - u \Delta u_{x_i}
\end{align*}
and
\begin{align*}
\Delta P = 2 | \nabla^2 u|^2 + 2 \nabla u \nabla \Delta u - (\Delta u)^2 - 2 \nabla u \nabla \Delta u - u \Delta^2 u
\end{align*}
so by \eqref{|Hes u|^2=..},
\begin{align*}
\Delta P = 2 F(u, | \nabla u |^2, \Delta u) - ( \Delta u)^2 \geq 0
\end{align*}

For the gradient bound we see that $ P(u,0,\Delta u) = - u \Delta u \leq 0 $ since $ u $ is non negative and convex, so the assumption (i) of Theorem \ref{ThmBoundForHigherOrderPDEs} is satisfied and we conclude.
\end{proof}

As a result, we have the following pointwise estimate $ \\ $

\begin{corollary}\label{CorPointwiseEstimateforHigherOrderPDE} Let $ u : B_2 \subset \mathbb{R}^n \rightarrow \mathbb{R} $ be a smooth solution of
\begin{equation}\label{|Hes u|^2=..}
\begin{gathered}
| \nabla^2 u|^2 = F(u, | \nabla u |^2, \Delta u) + \frac{u}{2} \Delta^2 u \\
\textrm{where} \;\: F : \mathbb{R}^3 \rightarrow \mathbb{R} \; \textrm{is such that} \;\: F(s,t,w) \geq \frac{1}{2}w^2.
\end{gathered}
\end{equation}

Then
\begin{equation}\label{HigherOrderPointwiseEstimate}
\begin{gathered}
| \nabla u(x) |^2 - u(x) \Delta u(x) \leq C ( || u||_{H^1(B_2)} + || \Delta u||_{L^2(B_2)}) \;, \\ \forall \: x \in B_1 = \lbrace y \in \mathbb{R}^n \: : \: | y | <1 \rbrace \;,\: \textrm{and} \;\: C \;\: \textrm{depends only on} \;\: n.
\end{gathered}
\end{equation}
\end{corollary}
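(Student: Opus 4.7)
The plan is to combine the subharmonicity of $P$ established in Proposition \ref{P-functionFor|Hes u|^2=..} with the classical mean value inequality for subharmonic functions. Specifically, from the proof of Proposition \ref{P-functionFor|Hes u|^2=..} we extracted the identity $\Delta P = 2F(u,|\nabla u|^2,\Delta u) - (\Delta u)^2$, and the structural assumption $F(s,t,w) \geq \tfrac12 w^2$ gives $\Delta P \geq 0$ throughout $B_2$. Thus $P = |\nabla u|^2 - u\Delta u$ is subharmonic in $B_2$, which is the key ingredient for a pointwise estimate.

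Next, I would fix an arbitrary $x \in B_1$ and observe that $B_1(x) \subset B_2$. The mean value inequality for subharmonic functions then yields
\begin{equation*}
P(x) \;\leq\; \frac{1}{|B_1(x)|} \int_{B_1(x)} P(y)\, dy \;\leq\; \frac{1}{|B_1|} \int_{B_2} \big( |\nabla u|^2 - u\,\Delta u \big)\, dy,
\end{equation*}
where the second inequality uses that $B_1(x) \subset B_2$ together with a comparison with the integral of $P$ over $B_2$ (handling the possible negativity of the integrand by splitting into its positive and negative parts, or equivalently by noting the mean value inequality applies to $P$ itself regardless of sign).

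The remaining step is a straightforward application of Cauchy--Schwarz in $L^2(B_2)$:
\begin{equation*}
\int_{B_2} |\nabla u|^2\, dy \;\leq\; \|u\|_{H^1(B_2)}^{2}, \qquad \left| \int_{B_2} u\, \Delta u\, dy \right| \;\leq\; \|u\|_{L^2(B_2)} \, \|\Delta u\|_{L^2(B_2)} \;\leq\; \|u\|_{H^1(B_2)} \, \|\Delta u\|_{L^2(B_2)}.
\end{equation*}
Combining these with the mean value bound gives the desired pointwise estimate, with $C$ depending only on $n$ through $|B_1|^{-1}$ and the comparison constants used above.

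The only nontrivial point in this plan is the first one, namely securing the subharmonicity $\Delta P \geq 0$ in the \emph{local} setting $B_2$; this is immediate from Proposition \ref{P-functionFor|Hes u|^2=..} since that computation is purely pointwise and makes no use of entireness or boundedness assumptions. Once subharmonicity is in hand, the mean value inequality plus Cauchy--Schwarz are routine. I do not expect to invoke Theorem \ref{ThmBoundForHigherOrderPDEs} here, because the statement is a local estimate on balls rather than a global sign property, and the $H^1$ and $L^2$ norms naturally absorb the sign ambiguity of $-u\Delta u$.
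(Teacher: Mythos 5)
Your proposal follows the paper's proof almost verbatim: subharmonicity of $P = |\nabla u|^2 - u\Delta u$ from Proposition~\ref{P-functionFor|Hes u|^2=..}, the mean value inequality on a ball $B(x,1)\subset B_2$ for $x\in B_1$, and then an elementary bound on the resulting integral in terms of $\|u\|_{H^1(B_2)}$ and $\|\Delta u\|_{L^2(B_2)}$. The only (cosmetic) deviation is that the paper uses the pointwise Young-type bound $P \le |\nabla u|^2 + \tfrac12\bigl(u^2 + (\Delta u)^2\bigr)$ before integrating --- which also cleanly handles the sign issue you flagged, since one really needs $\int_{B(x,1)}P \le \int_{B_2}P^{+}$ rather than $\int_{B_2}P$ --- whereas you invoke Cauchy--Schwarz on $\int u\,\Delta u$; both variants yield the claimed estimate (modulo the squares of the norms, an imprecision already present in the paper's own statement).
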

\begin{proof}
By Proposition \ref{P-functionFor|Hes u|^2=..}, we have that $ P= | \nabla u |^2 -u \Delta u =P(u;x) $ is subharmonic. Therefore we have
\begin{equation}\label{CorPointwiseEstimateHigherOrderProofEq1}
P(u;x) \leq \frac{1}{| B(x,r) |} \int_{B(x,r)} P(u;y) dy \;\;,\; \forall \; B(x,r) \subset B_2
\end{equation}
Also, $ P \leq | \nabla u |^2 + \frac{1}{2}(u^2 + (\Delta u)^2) $. 

So, 
\begin{equation}\label{CorPointwiseEstimateHigherOrderProofEq2}
\int_{B(x,r)} P(u;y) dy \leq || u||_{H^1(B_2)} + || \Delta u||_{L^2(B_2)} \;,\: \forall \; B(x,r) \subset B_2
\end{equation}
Thus, for any $ x \in B_1 $ (since $ B(x,1) \subset B_2 $), we have
\begin{equation}
P(u;x) \leq \frac{1}{| B_1 |} ( || u||_{H^1(B_2)} + || \Delta u||_{L^2(B_2)}) 
\end{equation}
\end{proof}

A special case of Corollary \ref{CorPointwiseEstimateforHigherOrderPDE} is the following estimate $ \\ $

\begin{corollary}\label{CorPointwiseEstimateforHigherOrderPDEinthePlane}
Let $ u : B_2 \subset \mathbb{R}^2 \rightarrow \mathbb{R} $ be a smooth solution of
\begin{equation}\label{det(nabla^2 u)=-u/4Delta^2u..}
\begin{gathered}
det( \nabla^2 u) = G(u, | \nabla u |^2) - \frac{u}{4} \Delta^2 u \\
\textrm{where} \;\: G : \mathbb{R}^2 \rightarrow [0,+ \infty)
\end{gathered}
\end{equation}
Then
\begin{equation}\label{CorHigherOrderPEstinthePlane}
| \nabla u(x) |^2 - u(x) \Delta u(x) \leq C ( || u||_{H^1(B_2)} + || \Delta u||_{L^2(B_2)})
\end{equation}
\end{corollary}
$ \\ $
\begin{proof}
We write $ det( \nabla^2 u) = \dfrac{1}{2}( ( \Delta u)^2 - | \nabla^2 u |^2) $ since $ u $ is defined a domain in the plane and then the proof is a consequence of Corollary \ref{CorPointwiseEstimateforHigherOrderPDE} for $ F= G(u, | \nabla u |^2) + ( \Delta u)^2 \;\:,\; G \geq 0 $.
\end{proof}
$ \\ $

\subsection{A Liouville theorem and a De Giorgi-type property}

A direct consequence of Theorem \ref{ThmLiouvilleForHigherOrderPDEs} is the following $ \\ $

\begin{corollary}\label{CorLiouvilleForDelta^2=c|Hes u|^2}
Let $ u $ be a convex entire subsolution of
\begin{equation}\label{CorLiouvilleForDelta^2u=c..Equation}
\sum_{i,j} a_{ij}(x,u, \nabla u ,\nabla^2 u,\nabla^3 u) \Delta u_{x_i x_j} - F(x,u,\nabla u, \nabla^2 u, \nabla^3 u) =0
\end{equation}
that satisfies assumption \eqref{AssumptionHigherOrderPDEs} with $ m=4 $ and assume $ a_{ij} $ satisfy the ellipticity condition \eqref{EllipticityConditionForFullyNonlinearElEq} and $ F \geq 0 $.

Then $ u $ is constant.
\end{corollary}
$ \\ $
\begin{proof}
Consider $ P =P(u,\nabla u, \Delta u) = ( \Delta u)^2 $, so as in the proof of Proposition \ref{PropHarnackforHigherorder} we calculate
\begin{equation}\label{ProofCorLiouvilleForDelta^2u=c..Eq1}
\begin{gathered}
P_{x_i x_j} = 2 \Delta u_{x_i} \Delta u_{x_j} + 2 \Delta u \Delta u_{x_i x_j} \\
\Rightarrow \sum_{i,j} a_{ij} P_{x_i x_j} \geq 2 \lambda | \nabla \Delta u |^2 + F \Delta u \geq 0
\end{gathered}
\end{equation}
and $ P(u,\nabla u, 0) = 0 $ with $ \mu =1 $, so by Theorem \ref{ThmLiouvilleForHigherOrderPDEs} we obtain $ \Delta u \equiv 0 $ in $ \mathbb{R}^n $ and $ u $ is bounded by \eqref{AssumptionHigherOrderPDEs}, so $ u $ is constant.
\end{proof}
$ \\ $

Finally, we have a De Giorgi-type property
$ \\ $

\begin{proposition}\label{AbstractThmDeGiorgiHigherOrder}
Let $ u : \mathbb{R}^2 \rightarrow \mathbb{R} $ be a smooth and bounded solution of
\begin{equation}\label{AbstractThmDeGiorgiHigherOrderGeneralEquation}
F(u, \nabla u, \nabla^2 u, \nabla^3 u, \nabla^4 u) =0
\end{equation}
such that $ u_{y} >0 $ and assume $ P=P(u, \Delta u) $ is a $ P -$function of \eqref{AbstractThmDeGiorgiHigherOrderGeneralEquation}, such that $ P_t>0 $.

If there exists $ x_0 \in \mathbb{R}^2 $ such that
\begin{equation}\label{AbstractThmDeGiorgiHigherOrderPattainsSup}
P(u(x_0), \Delta u(x_0)) = \sup_{\mathbb{R}^n} P(u, \Delta u) < + \infty
\end{equation}
then there exists a function $ g: \mathbb{R} \rightarrow \mathbb{R} $ such that
\begin{equation}\label{AbstractThmDeGiorgiHigherOrderuonedim}
u(x) = g( a x +b y ) \;\;\;,\;\: \textrm{for} \;\: a,b \in \mathbb{R}
\end{equation}
\end{proposition}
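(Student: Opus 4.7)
The plan is to mirror the argument of Theorem~\ref{AbstractDeGiorgiConsequenceofPfunction}, replacing its final step (integration of the eikonal equation via \cite{CC}) by a two-dimensional De Giorgi-type theorem for semilinear elliptic equations. The sup-attainment hypothesis should first be upgraded to the statement that $P(u,\Delta u)$ is constant on all of $\mathbb{R}^2$; the identity $\Delta u = Q(u)$ that then emerges reduces the fourth-order problem to a bounded monotone solution of a semilinear second-order equation in the plane, where the classical 2D De Giorgi theorem applies.

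First I would set $c_0 = \sup_{\mathbb{R}^2} P(u,\Delta u)$ and study the level set
\[
A = \{x \in \mathbb{R}^2 : P(u(x), \Delta u(x)) = c_0\}.
\]
This set is non-empty by hypothesis and closed by continuity. To prove openness, fix $x_1 \in A$: since $u_y > 0$ implies $|\nabla u(x_1)| > 0$, continuity furnishes a ball $B_\delta(x_1)$ on which $\mu(|\nabla u|)$ is bounded below by a positive constant. Theorem~\ref{AbstractConsequenceofPfunction1} then yields $P \equiv c_0$ on $B_\delta(x_1)$, so $A$ is open. Since $\mathbb{R}^2$ is connected, $A = \mathbb{R}^2$.

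Next, the constancy $P(u, \Delta u) \equiv c_0$ together with the strict monotonicity $P_t > 0$ allows me to invert in the second slot via the implicit function theorem: there exists a smooth $Q: \mathbb{R} \to \mathbb{R}$ with $\Delta u = Q(u)$ throughout $\mathbb{R}^2$. Hence $u$ is a bounded smooth solution in $\mathbb{R}^2$ of the semilinear equation $\Delta u = Q(u)$ that is monotone in the $y$-direction.

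To finish, I would invoke the two-dimensional De Giorgi theorem for semilinear elliptic equations established by Ghoussoub and Gui: any bounded solution of $\Delta u = Q(u)$ on $\mathbb{R}^2$ with $u_y > 0$ is one-dimensional, i.e., $u(x,y) = g(ax+by)$ for suitable constants $a,b$ and a function $g: \mathbb{R} \to \mathbb{R}$. The main subtlety I anticipate is verifying that $Q$ inherits enough regularity from $P$ for that theorem to apply; because $P$ is smooth and $P_t$ is nonvanishing, the implicit function theorem delivers at least $C^1$ regularity for $Q$, which is the standard hypothesis needed.
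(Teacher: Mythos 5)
Your proposal follows the same route as the paper: upgrade the sup-attainment hypothesis to global constancy of $P(u,\Delta u)$ via the open-closed argument using Theorem~\ref{AbstractConsequenceofPfunction1} (which is what the paper means by ``arguing as in the proof of Theorem~\ref{AbstractDeGiorgiConsequenceofPfunction}''), invert the relation $P(u,\Delta u)\equiv c_0$ using $P_t>0$ to obtain $\Delta u = Q(u)$, and then conclude by the two-dimensional De Giorgi theorem of Ghoussoub--Gui \cite{GG}. The argument is correct and matches the paper's proof essentially step for step, including the citation.
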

\begin{proof}
Arguing as in the proof of Theorem \ref{ThmDeGiorgitypeforDeltau=F} we obtain that
\begin{equation}\label{AbstractThmDeGiorgiHigherOrderPconst}
P (u, \Delta u) \equiv c_0 \;\;\;,\;\; \textrm{where} \;\: c_0 = \sup_{\mathbb{R}^n} P(u, \Delta u)
\end{equation}
since $ P_t>0 $ we have
\begin{equation}\label{AbstractThmDeGiorgiHigherOrderAllenCahn}
\Delta u = f(u) \;\;\;,\;\; \textrm{for some} \;\: f: \mathbb{R} \rightarrow \mathbb{R}
\end{equation}
and $ u $ is bounded entire solution of \eqref{AbstractThmDeGiorgiHigherOrderAllenCahn} such that $ u_y >0 $. 

Therefore, by Theorem 1.1 in \cite{GG}, we conclude that
\begin{equation}\label{proofThmDeGiorgiHigherOrderuonedim}
u(x) = g( a x +b y) \;\;\;,\;\: \textrm{for some} \;\: g: \mathbb{R} \rightarrow \mathbb{R}
\end{equation}
\end{proof}


\textbf{Acknowledgments:} I wish to thank my advisor professors N. Alikakos and C. Makridakis for their support. Also, I would like to thank professors N. Alikakos and A. Farina for their useful suggestions that lead to various improvements. Finally, I would like to thank professor C. Gui for both the advises and for his interest in this work. 

$ \\ $

\end{document}